\documentclass[12pt,a4paper,english,reqno]{amsart}
\usepackage[english]{babel}
\usepackage[T1]{fontenc}
\usepackage[utf8]{inputenc}

\usepackage{amsmath}
\usepackage{amsfonts}
\usepackage{amssymb}
\usepackage{amsthm}
\usepackage{graphicx}
\usepackage{verbatim}
\usepackage[textsize=tiny]{todonotes}
\usepackage{enumerate}
\usepackage{mathtools}
\usepackage{calrsfs}
\usepackage{hyperref}

\mathtoolsset{showonlyrefs=true}
\newtheorem{theorem}{Theorem}[section]
\theoremstyle{plain}

\newtheorem{definition}[theorem]{Definition}

\newtheorem{lemma}[theorem]{Lemma}

\newtheorem{observation}[theorem]{Observation}
\theoremstyle{definition}

\numberwithin{equation}{section}
\allowdisplaybreaks[1]

\begin{document}
\bibliographystyle{plain}
\title[Radial multipliers on amalgamated free products]{Radial multipliers on amalgamated free products of $\mathrm{II}_1$-factors}
\author{Sören Möller}
\address{Department of Mathematics and Computer Science \\ University of Southern Denmark \\ Campusvej 55 \\ 5230 Odense M \\ Denmark}
\email{moeller@imada.sdu.dk}
\subjclass[2010]{Primary 46L54; Secondary 46L07}
\date{\today}

\begin{abstract}
Let $\mathcal{M}_i$ be a family of $\mathrm{II}_1$-factors, containing a common $\mathrm{II}_1$-subfactor $\mathcal{N}$, such that $[\mathcal{M}_i:\mathcal{N}] \in \mathbb{N}_0$ for all $i$. Furthermore, let $\phi \colon \mathbb{N}_0 \to \mathbb{C}$. We show that if a Hankel matrix related to $\phi$ is trace-class, then there exists a unique completely bounded map $M_\phi$ on the amalgamated free product of the $\mathcal{M}_i$ with amalgamation over $\mathcal{N}$, which acts as an radial multiplier. Hereby we extend a result of U. Haagerup and the author for radial multipliers on reduced free products of unital $C^*$- and von Neumann algebras.
\end{abstract}

\maketitle

\section{Introduction}
Let $\mathcal{C}$ denote the set of functions $\phi$ on the non-negative integers $\mathbb{N}_0$ for which the matrix 
\begin{align*}
h = (\phi(i+j) - \phi(i+j+1))_{i,j \ge 0}
\end{align*}
is of trace-class. Let $(\mathcal{M},\omega) = \bar{\ast}_{i \in I} (\mathcal{M}_i,\omega_i)$ be the w*-reduced free product of von Neumann algebras $(\mathcal{M}_i)_{i \in I}$ with respect to normal states $(\omega_i)_{i \in I}$ for which the GNS-representation $\pi_{\omega_i}$ is faithful, for all $i \in I$.
In \cite{HaagerupMoeller2012} U. Haagerup and the author proved that if $\phi \in \mathcal{C}$, then there is a unique linear completely bounded normal map $M_\phi \colon \mathcal{M} \to \mathcal{M}$ such that $M_\phi(1)= \phi(0) 1$ and 
\begin{align}
M_\phi(a_1 a_2 \dots a_n) = \phi(n) a_1 a_2 \dots a_n
\end{align}
whenever $a_j \in \mathring{\mathcal{M}}_{i_j}=\operatorname{ker}(\omega_{i_j})$ and $i_1 \neq i_2 \neq \dots \neq i_n$.
Moreover,\linebreak $\| M_\phi \|_{cb} \le \|\phi\|_\mathcal{C}$ where $\| \cdot \|_\mathcal{C}$ is the norm on $\mathcal{C}$ defined in \eqref{eq_C_norm_amalgamated} below.

This result generalized the corresponding result for reduced $C^*$-alge\-bras of discrete groups which was proved by J. Wysocza{\'n}ski in \cite{Wysoczanski1995}.

In this paper we generalize above result to the case of amalgamated free products of $\mathrm{II}_1$-factors where amalgamation occurs over a integer-index $\mathrm{II}_1$-subfactor, resulting in the following theorem.

\begin{theorem}
\label{thm_amalgamatedRadial_main_theorem}
Let $(\mathcal{M},\mathbb{E}_\mathcal{N}) = \bar{\ast}_{\mathcal{N},i \in I} (\mathcal{M}_i,\mathbb{E}^i_\mathcal{N})$ be the amalgamated free product of $\mathrm{II}_1$-factors $(\mathcal{M}_i)_{i \in I}$ over a $\mathrm{II}_1$-subfactor $\mathcal{N}$ for which $[\mathcal{M}_i:\mathcal{N}] \in \mathbb{N}_0$ for all $i \in I$. Here $\mathbb{E}^i_\mathcal{N} \colon \mathcal{M}_i \to \mathcal{N}$, respectively, $\mathbb{E}_\mathcal{N} \colon \mathcal{M} \to \mathcal{N}$ denote the canonical conditional expectations.

If $\phi \in \mathcal{C}$, then there is a unique linear completely bounded normal right-$\mathcal{N}$-module map $M_\phi \colon \mathcal{M} \to \mathcal{M}$ such that $M_\phi(1)= \phi(0) 1$ and
\begin{align}
M_\phi(b_0 a_1 b_1 a_2 b_2 \dots a_n b_n ) = \phi(n) b_0 a_1 b_1 a_2 b_2 \dots a_n b_n
\end{align}
whenever $a_j \in \mathring{\mathcal{M}}_{i_j}=\operatorname{ker}(\mathbb{E}^i_\mathcal{N})$, $i_1 \neq i_2 \neq \dots \neq i_n$ and $b_0, \dots, b_n \in \mathcal{N}$. Moreover, $\| M_\phi \|_{cb} \le \|\phi\|_\mathcal{C}$.
\end{theorem}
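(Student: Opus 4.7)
The plan is to adapt the proof of Haagerup--Möller \cite{HaagerupMoeller2012} from the non-amalgamated setting, exploiting the integer-index hypothesis to discretize the $\mathcal{N}$-bimodule structure via a Pimsner--Popa basis. To begin, I would invoke the trace-class assumption on $h$ to obtain a Schmidt-type factorization
\[
\phi(i+j) - \phi(i+j+1) \;=\; \sum_{k} \alpha_k(i)\,\overline{\beta_k(j)}, \qquad \sum_k \|\alpha_k\|_{\ell^2}\|\beta_k\|_{\ell^2} = \|h\|_1 = \|\phi\|_\mathcal{C},
\]
and combine it with the Abel identity $\phi(n) - \phi(\infty) = \sum_{m\ge n}(\phi(m)-\phi(m+1))$. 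This rewrites $M_\phi - \phi(\infty)\mathbb{E}_\mathcal{N}$, up to a boundary term at $n=0$, as a norm-convergent sum of elementary radial multipliers $T_{\alpha_k,\beta_k}$, one for each singular-vector pair.

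Next, since $[\mathcal{M}_i:\mathcal{N}] = n_i \in \mathbb{N}_0$, I would fix for each $i \in I$ a Pimsner--Popa basis $\{e^{(i)}_1,\dots,e^{(i)}_{n_i}\}$ of $\mathcal{M}_i$ over $\mathcal{N}$ normalized so that $\mathbb{E}^i_\mathcal{N}(e^{(i)*}_s e^{(i)}_t) = \delta_{st}\,1$, arranged so all but one basis vector lies in $\mathring{\mathcal{M}}_i$. Every reduced word $b_0 a_1 b_1 \cdots a_n b_n$ then admits a right-$\mathcal{N}$-linear expansion in the alternating basis-words $e^{(i_1)}_{s_1}\cdots e^{(i_n)}_{s_n}\,b$ with $b\in\mathcal{N}$ and $i_1\ne i_2 \ne \dots \ne i_n$, yielding an orthonormal decomposition of $L^2(\mathcal{M},\mathbb{E}_\mathcal{N})$ in the $\mathcal{N}$-valued inner product that mirrors the Hilbert-space decomposition used by Haagerup--Möller.

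Each elementary multiplier $T_{\alpha_k,\beta_k}$ can then be realized as a product of a creation-type and an annihilation-type operator on this decomposition: splitting a length-$n$ alternating basis-word at position $i$, weighting the two halves by $\alpha_k(i)$ and $\overline{\beta_k(n-i)}$, and reassembling. In the right-$\mathcal{N}$-module setting these are row--column factorizable operators in the sense of Pisier, so a standard row-by-column estimate gives $\|T_{\alpha_k,\beta_k}\|_{cb} \le \|\alpha_k\|_{\ell^2}\|\beta_k\|_{\ell^2}$. Summing over $k$ yields $\|M_\phi\|_{cb}\le\|\phi\|_\mathcal{C}$, while normality and the right-$\mathcal{N}$-module property are built in by construction, and uniqueness follows from normality together with the density of reduced words.

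The principal obstacle I anticipate is verifying in detail that the alternating Pimsner--Popa basis-words form a genuine orthonormal basis of the Fock-like decomposition of $L^2(\mathcal{M},\mathbb{E}_\mathcal{N})$, and that the induced creation/annihilation operators respect the right-$\mathcal{N}$-module structure sharply enough to keep the row--column estimate tight. This is precisely where integrality of $[\mathcal{M}_i:\mathcal{N}]$ is essential: a non-integer index would leave a fractional basis vector breaking the clean $\delta_{st}$ relation and with it the row--column bound. Once this step is carried out, uniform control over the partial sums in $k$ plus a standard ultraweak continuity argument produces the claimed normal completely bounded map $M_\phi$ on all of $\mathcal{M}$.
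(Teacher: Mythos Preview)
Your overall strategy---use the integer index to produce a Pimsner--Popa basis, rewrite the Fock module in terms of alternating basis-words, and then transplant the Haagerup--M\"oller multiplier construction---is exactly the route the paper takes. However, two concrete points in your outline do not survive contact with the actual argument.

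First, you set $\|\phi\|_\mathcal{C} = \|h\|_1$ and work with a single Hankel matrix. In the paper (and in \cite{HaagerupMoeller2012}) the norm is $\|\phi\|_\mathcal{C} = \|h\|_1 + \|k\|_1 + |c|$, where $k = (\phi(i{+}j{+}1)-\phi(i{+}j{+}2))_{i,j\ge0}$ is a \emph{second} trace-class Hankel matrix, and one decomposes $\phi = \psi_1 + \psi_2 + c$ with $\psi_1, \psi_2$ built from $h, k$ respectively. This is not cosmetic: a reduced operator $a_1\cdots a_n$ decomposes into creation/annihilation strings $L_{\xi_1}\cdots L_{\xi_k}L^*_{\eta_1}\cdots L^*_{\eta_l}$, and the relation between $n$ and $(k,l)$ depends on whether $\xi_k$ and $\eta_1$ come from the same $\mathring\Gamma_i$ (``Case~2'', where $n=k+l-1$) or from different ones (``Case~1'', where $n=k+l$). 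The two maps $T_1$ and $T_2$---built from $\Phi^{(1)}$ and from $\Phi^{(2)}$, the latter involving the extra projection $\epsilon(a)=\sum_i q_i a q_i$---are tuned so that their sum gives $\phi(n)$ in \emph{both} cases. A single-matrix, single-$\Phi$ construction will get the wrong scalar in Case~2, and your Abel-summation formula does not detect this shift.

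Second, the correction term should be $c\,\mathrm{Id}$, not $\phi(\infty)\,\mathbb{E}_\mathcal{N}$: the limit $c$ contributes $c$ on \emph{every} word length, not just on $\mathcal{N}$. Beyond these two points, your description of the elementary multipliers as ``creation times annihilation, split at position $i$'' is too coarse; the paper's $\Phi^{(1)}_{x,y}$ involves the diagonal length-operators $D_{(S^*)^n x}$ and the right-shift maps $\rho^n(a)=\sum_\gamma R_{\gamma^*} a R_{\gamma^*}^*$, and the row--column bound comes from the identity $\sum_n D_{(S^*)^n x}D^*_{(S^*)^n x} + \sum_n D_{S^n x}Q_n D^*_{S^n x} = \|x\|^2 1$, not from a bare Pisier factorization. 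The Pimsner--Popa orthonormality you flag as the main obstacle is indeed what makes $\rho$ and the row--column identity work, but the two-case analysis is where the real bookkeeping lies.
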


Recently this result has been extended by S. Deprez to arbitrary amalgamated free products of finite von Neumann algebras in \cite{Deprez2013p} using different methods related to the construction of relative Fock spaces. It is an open problem if a similar result holds for even more general amalgamated free products. {\`E}. Ricard and Q. Xu investigated in  \cite[Section 5]{RicardXu2006} the amalgamated setting and the weaker estimate 
\begin{align}
\|M_\phi\|_{cb} \le |\phi(0)| + \sum_{n=1}^\infty 4n|\phi(n)|
\end{align}
can be obtained from \cite[Corollary 3.3]{RicardXu2006}.

We start by investigating some preliminaries in Section \ref{sec_amalgamated_preliminaries} followed by constructing the main building blocks of the radial multiplier in Section \ref{sec_amalgamated_technical}. The main result is then proved in Section \ref{sec_amalgamated_main_proof}.

\section{Preliminaries}
\label{sec_amalgamated_preliminaries}
Assume that $(\mathcal{N},\tau)$ is a $\mathrm{II}_1$-factor equipped with its finite trace $\tau$. Let now $I$ be some index set and
let $\mathcal{M}_i$, $i \in I$ be a family of $\mathrm{II}_1$-factors containing $\mathcal{N}$ equipped with the canonical conditional expectations $\mathbb{E}_\mathcal{N}^i \colon \mathcal{M}_i \to \mathcal{N}$. We will assume that the Jones index $[\mathcal{M}_i:\mathcal{N}]$ is an integer for all $i \in I$. Denote by $\tau_i = \tau \circ \mathbb{E}_\mathcal{N}^i$ the state on $\mathcal{M}_i$ and by $H_i = L^2(\mathcal{M}_i,\tau_i)$ the Hilbert space associated to $\mathcal{M}_i$ with respect to $\tau_i$. Observe that we can see $H_i$ as an $\mathcal{N}$-bimodule. Set $\mathring{H}_i = L^2(\mathcal{N},\tau)^\perp$ and set $\mathring{\mathcal{M}}_i = \mathcal{M}_i \cap \ker(\mathbb{E}_\mathcal{N}^i)$. Moreover, denote by $e^i_\mathcal{N}$ the projection from $L^2(\mathcal{M}_i,\tau_i)$ onto $L^2(\mathcal{N},\tau)$.
 
Let $(\mathcal{M},\tau) = \bar{\ast}_{\mathcal{N}, i \in I} \mathcal{M}_i$ be the amalgamated free product with respect to $\mathbb{E}_\mathcal{N}^i$ and let $\mathbb{E}_{\mathcal{N}} \colon \mathcal{M} \to \mathcal{N}$ be the conditional expectation comparable with $\tau$.

We have the amalgamated free Fock space
\begin{align}
\mathcal{F}_\mathcal{N} = L^2(\mathcal{N}) \oplus \bigoplus_{n \ge 1, i_1 \neq \cdots \neq i_n} \mathring{H}_{i_1} \otimes_\mathcal{N} \cdots \otimes_\mathcal{N} \mathring{H}_{i_n}
\end{align}
where $\otimes_\mathcal{N}$ fulfills the relation $x \otimes_\mathcal{N} by = xb \otimes_\mathcal{N} y$ for all $x \in \mathring{H}_i, y \in \mathring{H}_j, i \neq j$ and $b \in \mathcal{N}$. Note that there are natural left and right actions of $\mathcal{N}$ on $\mathcal{F}_\mathcal{N}$.
We denote by $\mathbb{B}_\mathcal{N}(\mathcal{F}_\mathcal{N})$ the subspace of $\mathbb{B}(\mathcal{F}_\mathcal{N})$ of maps that commute with the right action of $\mathcal{N}$.

Moreover, we will by $\langle \cdot, \cdot \rangle_\mathcal{N}$ denote the $\mathcal{N}$-valued inner product on $H_i$ coming from $\mathbb{E}_\mathcal{N}^i$ and extended to $\mathcal{F}$.

As M. Pimsner an S. Popa proved in \cite{PimsnerPopa1986} there will exist Pimsner-Popa-bases for these inclusions.
\begin{theorem}[{\cite[Proposition 1.3]{PimsnerPopa1986}}]
Let $\mathcal{M}_i$ and $\mathcal{N}$ as above and assume $[\mathcal{M}_i:\mathcal{N}] = N_i \in \mathbb{N}_0$. Set $n_i = N_i - 1$.
There exists a set $\Gamma_i=\{e_0^i, \dots, e_{n_i}^i \} \subset \mathcal{M}_i$ satisfying the properties:
\begin{itemize}
\item $\mathbb{E}_\mathcal{N}^i(e_k^{i*}e_l^i) = 0$ for $l \neq k$
\item $\mathbb{E}_\mathcal{N}^i(e_k^{i*}e_k^i) = 1$ for $0 \le k \le n_i$
\item $1 = \sum_{j=0}^{n_i} e_j^i e_\mathcal{N}^i e_j^{i*}$ as an equality in $\mathbb{B}(L_2(\mathcal{M}_i))$
\item For all $x \in \mathcal{M}_i$ we have
\begin{align}
\label{eq_amalgamated_right_base_expansion}
x = \sum_{j=0}^{n_i} \mathbb{E}_\mathcal{N}^i( x e_j^{i} ) e_j^{i*}
\end{align}
\end{itemize} 
\end{theorem}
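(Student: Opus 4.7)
The plan is to construct $\Gamma_i$ using Jones' basic construction $\mathcal{M}_{1,i} := \langle \mathcal{M}_i, e_\mathcal{N}^i \rangle$. By Jones' theory for finite-index inclusions of $\mathrm{II}_1$-factors, $\mathcal{M}_{1,i}$ is itself a $\mathrm{II}_1$-factor, and its canonical trace $\tau_{1,i}$ satisfies $\tau_{1,i}(e_\mathcal{N}^i) = [\mathcal{M}_i:\mathcal{N}]^{-1} = 1/N_i$.

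Exploiting that $N_i$ is a positive integer, I would first decompose the identity of $\mathcal{M}_{1,i}$ as $1 = \sum_{j=0}^{n_i} p_j$ with mutually orthogonal projections of trace $1/N_i$, choosing $p_0 = e_\mathcal{N}^i$. Since in a $\mathrm{II}_1$-factor any two projections of equal trace are Murray--von Neumann equivalent, there exist partial isometries $v_0 = e_\mathcal{N}^i, v_1, \dots, v_{n_i} \in \mathcal{M}_{1,i}$ with $v_j^* v_j = e_\mathcal{N}^i$ and $v_j v_j^* = p_j$. Each $v_j$ satisfies $v_j = v_j e_\mathcal{N}^i$, and by the standard basic-construction identity $\mathcal{M}_{1,i} e_\mathcal{N}^i = \mathcal{M}_i e_\mathcal{N}^i$ there are unique $e_j^i \in \mathcal{M}_i$ with $v_j = e_j^i e_\mathcal{N}^i$. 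These form the candidate set $\Gamma_i$.

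The four stated properties then reduce to book-keeping. The third is immediate, since $e_j^i e_\mathcal{N}^i e_j^{i*} = v_j v_j^* = p_j$ and $\sum_j p_j = 1$. For the first two, comparing the orthogonality $v_j^* v_k = \delta_{jk}\, e_\mathcal{N}^i$ with the computation $v_j^* v_k = e_\mathcal{N}^i e_j^{i*} e_k^i e_\mathcal{N}^i = \mathbb{E}_\mathcal{N}^i(e_j^{i*} e_k^i) e_\mathcal{N}^i$ forces $\mathbb{E}_\mathcal{N}^i(e_j^{i*} e_k^i) = \delta_{jk}$. For the expansion \eqref{eq_amalgamated_right_base_expansion}, I would apply the operator identity $1 = \sum_j e_j^i e_\mathcal{N}^i e_j^{i*}$ on $L^2(\mathcal{M}_i)$ to the canonical vector corresponding to $x^*$, use that $e_\mathcal{N}^i$ acts as $\mathbb{E}_\mathcal{N}^i$, obtain $x^* = \sum_j e_j^i \mathbb{E}_\mathcal{N}^i(e_j^{i*} x^*)$, and take adjoints.

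The step requiring the most care is the identification $\mathcal{M}_{1,i} e_\mathcal{N}^i = \mathcal{M}_i e_\mathcal{N}^i$, which allows extracting the element $e_j^i \in \mathcal{M}_i$ from the partial isometry $v_j \in \mathcal{M}_{1,i}$. Although standard in subfactor theory, the argument uses that elementary products $m_1 e_\mathcal{N}^i m_2$ are strongly dense in $\mathcal{M}_{1,i}$ together with the relation $e_\mathcal{N}^i m e_\mathcal{N}^i = \mathbb{E}_\mathcal{N}^i(m) e_\mathcal{N}^i$, combined with a closure argument exploiting that $\mathcal{M}_i e_\mathcal{N}^i$ is closed in the strong topology. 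Integrality of $N_i$ itself enters only in the partition step: without it the final partial isometry would have source projection strictly smaller than $e_\mathcal{N}^i$, preventing the clean normalization $\mathbb{E}_\mathcal{N}^i(e_{n_i}^{i*} e_{n_i}^i) = 1$.
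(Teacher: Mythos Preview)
The paper does not supply its own proof of this statement; it is quoted verbatim as \cite[Proposition~1.3]{PimsnerPopa1986} and used as a black box. Your proposal correctly reconstructs the original Pimsner--Popa argument via the basic construction, and the reasoning is sound throughout (including the pull-down identity $\mathcal{M}_{1,i}e_\mathcal{N}^i=\mathcal{M}_ie_\mathcal{N}^i$ and the injectivity needed for uniqueness of $e_j^i$). Note also that your choice $p_0=e_\mathcal{N}^i$, $v_0=e_\mathcal{N}^i$ forces $e_0^i=1$, which is exactly the normalization the paper imposes immediately after stating the theorem.
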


We can without loss of generality choose $e_0^i = 1_\mathcal{N}^i$ for all $i \in I$, indeed, in the basis construction of \cite{PimsnerPopa1986}, one could choose $e^i_\mathcal{N}$ as the first projection and then use well-known facts on projections in $\mathrm{II}_1$-factors to complete the basis.

Set $\mathring{\Gamma}_i = \Gamma_i \setminus \{e_0^i\}$ and set $\mathring{\Gamma} = \bigcup_{i \in I} \mathring{\Gamma}_i$.
Denote 
\begin{align}
\Lambda(k) & = \{ b_0 \chi_1 b_1 \otimes_\mathcal{N} \cdots \otimes_\mathcal{N} \chi_{k-1} b_{k-1} \otimes_\mathcal{N} \chi_k b_k : \\
& \qquad \chi_j \in \mathring{\Gamma}_{i_j}, i_1 \neq i_2 \neq \cdots \neq i_n, b_0, \dots, b_k \in \mathcal{N} \} \subset \mathcal{F}_\mathcal{N}
\end{align}
and note that the span of $\Lambda(k), k \ge 0$ is dense in $\mathcal{F}_\mathcal{N}$.

\begin{definition}
\label{def_amalgamatedRadial_L_R}
For $\gamma \in  \mathring{\Gamma}_i$ and $\chi \in \Lambda(k)$ with $\chi_1 \in \mathring{\Gamma}_j$ and $\chi_k \in \mathring{\Gamma}_h$ we define
\begin{align}
L_\gamma(\chi) &= \left\{ \begin{array}{ll} \gamma \otimes_\mathcal{N} \chi & \qquad \text{ if } i \neq j \\ 0 & \qquad \text{ otherwise } \end{array} \right. \\
L^*_\gamma(\chi) &=  \mathbb{E}_\mathcal{N}(\gamma^* b_0 \chi_1) \chi' \quad \text{ if } \chi = b_0 \chi_1 \otimes_\mathcal{N} \chi' \\
R_{\gamma^*}(\chi) &= \left\{ \begin{array}{ll} \chi \otimes_\mathcal{N} \gamma^* & \qquad \text{ if } i \neq h \\ 0 & \qquad \text{ otherwise } \end{array} \right. \\
R^*_{\gamma^*}(\chi) &= \chi' \mathbb{E}_\mathcal{N}(\chi_k b_k \gamma) \quad \text{ if } \chi = \chi' \otimes_\mathcal{N} \chi_k b_k. 
\end{align}
\end{definition}

Note that if $\gamma \in \mathring{\Gamma}_i$ then $L_\gamma$, $L_\gamma^*$, $R_\gamma$ and $R_\gamma^*$ naturally sit in the copy of $\mathbb{B}_\mathcal{N}(H_i)$ inside $\mathbb{B}_\mathcal{N}(\mathcal{F}_\mathcal{N})$.

\begin{observation}
Observe that $L_\gamma$ and $L^*_\gamma$ commute with the right action of $\mathcal{N}$ on $\mathcal{F}_\mathcal{N}$, hence $L_\gamma, L^*_\gamma \in \mathbb{B}_\mathcal{N}(\mathcal{F}_\mathcal{N})$, while $R_{\gamma^*}$ and $R^*_{\gamma^*}$ commute with the left action of $\mathcal{N}$ on $\mathcal{F}_\mathcal{N}$.
\end{observation}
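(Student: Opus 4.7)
The plan is to verify each of the four commutation claims by direct computation on the dense subspace spanned by the elementary tensors in $\Lambda(k)$, $k \ge 0$, and then extend by linearity and boundedness. The key point is that on $\chi = b_0 \chi_1 b_1 \otimes_\mathcal{N} \cdots \otimes_\mathcal{N} \chi_k b_k$ the right action of $b \in \mathcal{N}$ only replaces $b_k$ by $b_k b$, while the left action only replaces $b_0$ by $b b_0$. Since $L_\gamma$ and $L^*_\gamma$ are defined purely in terms of the leftmost block and the tail, they will be insensitive to modifications at the right end, and symmetrically for $R_{\gamma^*}$ and $R^*_{\gamma^*}$.

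Concretely, for $L_\gamma$ with $\chi_1 \in \mathring{\Gamma}_j$, $j \neq i$, the definition gives
\[
L_\gamma(\chi \cdot b) = \gamma \otimes_\mathcal{N} \bigl(b_0 \chi_1 b_1 \otimes_\mathcal{N} \cdots \otimes_\mathcal{N} \chi_k (b_k b)\bigr) = (\gamma \otimes_\mathcal{N} \chi) \cdot b = L_\gamma(\chi) \cdot b,
\]
while the case $i = j$ is trivial as both sides vanish. For $L^*_\gamma$, writing $\chi = b_0 \chi_1 \otimes_\mathcal{N} \chi'$ one has $L^*_\gamma(\chi) = \mathbb{E}_\mathcal{N}(\gamma^* b_0 \chi_1)\chi'$, which depends on $\chi$ only through $b_0$, $\chi_1$ and $\chi'$; since right multiplication by $b$ affects only the tail $\chi'$, one immediately gets $L^*_\gamma(\chi \cdot b) = L^*_\gamma(\chi) \cdot b$. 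The arguments for $R_{\gamma^*}$ and $R^*_{\gamma^*}$ are strictly symmetric, with the roles of left and right interchanged.

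The only subtleties are the boundary case $k = 0$, where $\chi = b_0 \in L^2(\mathcal{N})$ and all four identities reduce to trivial computations, and the need to slide intermediate $\mathcal{N}$-factors across $\otimes_\mathcal{N}$-symbols when parsing $\chi$ as $b_0 \chi_1 \otimes_\mathcal{N} \chi'$ or $\chi' \otimes_\mathcal{N} \chi_k b_k$; both are handled automatically by the defining relation $x \otimes_\mathcal{N} by = xb \otimes_\mathcal{N} y$ of the tensor product. I do not expect any genuine obstacle, as the argument is essentially an unwinding of definitions. Once the commutation is established on the dense span, the membership $L_\gamma, L^*_\gamma \in \mathbb{B}_\mathcal{N}(\mathcal{F}_\mathcal{N})$ follows, with boundedness inherited from the natural embedding of $\mathbb{B}_\mathcal{N}(H_i)$ into $\mathbb{B}_\mathcal{N}(\mathcal{F}_\mathcal{N})$ noted just before the observation.
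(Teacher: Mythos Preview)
Your proof is correct and is exactly the natural direct verification one would carry out; the paper itself gives no proof for this observation, treating it as immediate from the definitions. Your handling of the boundary case $k=0$ and of the sliding of $\mathcal{N}$-factors across $\otimes_{\mathcal{N}}$ is appropriate and nothing further is needed.
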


We also have to define sets of operators, which in the amalgamated setting replace the idea of $\mathbb{B}(H)$ in the non-amalgamated case.
\begin{definition}
\label{def_amalgamatedRadial_mathcalL}
Denote by $\mathcal{L}$ the set
\begin{align}
\mathcal{L} & = \{ b_0 L_{\xi_1} b_1 \cdots L_{\xi_k} b_k L^*_{\eta_1} \tilde{b}_1 \cdots L^*_{\eta_l} \tilde{b}_l : k,l \in \mathbb{N}_0, \xi_j, \eta_j \in \mathring{\Gamma}, b_i, \tilde{b}_j \in \mathcal{N} \}
\end{align}
and denote by $\tilde{\mathcal{L}}$ the weakly closed linear span of $\mathcal{L}$ in $\mathbb{B}_\mathcal{N}(\mathcal{F}_\mathcal{N})$.

Furthermore, we define
\begin{align}
\mathcal{L}^{k,l} & = \{ b_0 L_{\xi_1} b_1 \cdots L_{\xi_k} b_k L^*_{\eta_1} \tilde{b}_1 \cdots L^*_{\eta_l} \tilde{b}_l : \xi_j, \eta_j \in \mathring{\Gamma}, b_i, \tilde{b}_j \in \mathcal{N} \}.
\end{align}

Similarly, for $i \in I$ denote by 
\begin{align}
\mathcal{L}_i & = \{ b_0 L_{\xi_1} b_1 \cdots L_{\xi_k} b_k L^*_{\eta_1} \tilde{b}_1 \cdots L^*_{\eta_l} \tilde{b}_l : k,l \in \mathbb{N}_0, \xi_j, \eta_j \in \mathring{\Gamma}_i, b_i, \tilde{b}_j \in \mathcal{N} \}
\end{align}
and denote by $\tilde{\mathcal{L}_i}$ the weakly closed linear span of $\mathcal{L}_i$ in $\mathbb{B}_\mathcal{N}(H_i)$.

Furthermore, we define
\begin{align}
\mathcal{L}_i^{k,l} & = \{ b_0 L_{\xi_1} b_1 \cdots L_{\xi_k} b_k L^*_{\eta_1} \tilde{b}_1 \cdots L^*_{\eta_l} \tilde{b}_l : \xi_j, \eta_j \in \mathring{\Gamma}_i, b_i, \tilde{b}_j \in \mathcal{N} \}.
\end{align}
\end{definition}

\begin{observation}
\label{obs_amalgamatedRadial_short_L}
Note that by an argument similar to \cite[Lemma 3.1]{HaagerupMoeller2012} it would be enough to consider terms of the forms $b_0 L_{\gamma} b_1$, $b_0 L_{\gamma}^* b_1$ and $b_0 L_{\gamma} b_1 L_{\delta}^* b_2$ in the defintions of the spans of $\mathcal{L}_i$ and $\mathcal{L}_i^{k,l}$ as we can use equation \eqref{eq_amalgamated_right_base_expansion} to move the $b_i$ out of the way.
\end{observation}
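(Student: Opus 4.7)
My plan is to show that in $\mathcal{L}_i^{k,l}$ every word with $k\ge 2$ or $l\ge 2$ already equals zero, so only the pairs $(k,l)\in\{(0,0),(1,0),(0,1),(1,1)\}$ can contribute, and the last three reproduce precisely the three listed short forms (the $(0,0)$ case merely gives a scalar in $\mathcal{N}$ already absorbed into the coefficients). The engine is the vanishing
\begin{align*}
L_{\xi_1}\, b\, L_{\xi_2} = 0 \qquad\text{whenever } \xi_1,\xi_2\in\mathring{\Gamma}_i \text{ and } b\in\mathcal{N},
\end{align*}
together with its adjoint $L_{\eta_2}^*\, b\, L_{\eta_1}^* = 0$. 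Once these are in hand, any longer word contains such a forbidden consecutive pair and therefore collapses.

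To prove the key identity I first extend $L_{\xi_1}$ from $\Lambda(k)$, where Definition~\ref{def_amalgamatedRadial_L_R} is formulated, to arbitrary first-tensor factors in $\mathring{H}_i$ by linearity and continuity. For $\xi\in\mathring{H}_i$ I use \eqref{eq_amalgamated_right_base_expansion} to approximate $\xi$ in $L^2$-norm by $\mathcal{N}$-bimodule combinations of elements of $\mathring{\Gamma}_i$. Each summand $b_0 e_j^i b_1$ (with $j\ge 1$) has first index equal to $i$, so applying Definition~\ref{def_amalgamatedRadial_L_R} with $\xi_1\in\mathring{\Gamma}_i$ yields $0$. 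Continuity of $L_{\xi_1}$ on $\mathcal{F}_\mathcal{N}$ then forces $L_{\xi_1}(\xi\otimes_\mathcal{N}\chi')=0$ for every $\xi\in\mathring{H}_i$ and every tail $\chi'$.

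With this extension in hand, the key identity reduces to a direct calculation on $\chi=b_0\chi_1 b_1\otimes_\mathcal{N}\chi'\in\Lambda(k)$ with $\chi_1\in\mathring{\Gamma}_j$ for some $j\neq i$ (the only case in which $L_{\xi_2}(\chi)\neq 0$): we find $bL_{\xi_2}(\chi)=b\xi_2\otimes_\mathcal{N}\chi$, and since $\xi_2\in\mathring{\mathcal{M}}_i$ yields $\mathbb{E}_\mathcal{N}^i(b\xi_2)=b\,\mathbb{E}_\mathcal{N}^i(\xi_2)=0$, we have $b\xi_2\in\mathring{H}_i$, so the extension kills the whole expression. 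The case $\chi\in L^2(\mathcal{N})$ is the same computation one layer down. Taking adjoints gives $L_{\eta_2}^* b L_{\eta_1}^*=(L_{\eta_1} b^* L_{\eta_2})^* = 0$. A direct inspection of a general word in $\mathcal{L}_i^{k,l}$ with $k\ge 2$ or $l\ge 2$ then exhibits a vanishing substring and therefore a vanishing product.

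The main technical obstacle is the first step: verifying that the linear extension of $L_{\xi_1}$ really annihilates all of $\mathring{H}_i$ as a first factor, not just the finite set $\mathring{\Gamma}_i$ appearing in Definition~\ref{def_amalgamatedRadial_L_R}. This is exactly where \eqref{eq_amalgamated_right_base_expansion} enters and matches the informal ``moving the $b_i$ out of the way''; it is the amalgamated analogue of the argument in \cite[Lemma~3.1]{HaagerupMoeller2012}. After this, everything is purely combinatorial on words.
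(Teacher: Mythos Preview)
Your overall argument is correct and is essentially the content of the paper's remark: longer words in $\mathcal{L}_i^{k,l}$ vanish because two consecutive creation operators from the same $\mathring{\Gamma}_i$ annihilate each other, and dually for two consecutive annihilation operators. The paper does not give a detailed proof either, merely referencing \cite[Lemma~3.1]{HaagerupMoeller2012} and the basis expansion, so there is little to compare.

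However, your ``main technical obstacle'' is not actually an obstacle, and the extension-by-continuity argument is unnecessary. The point is that $bL_{\xi_2}(\chi)$ is \emph{already} an element of $\Lambda(k+1)$, with first letter $\chi_1'=\xi_2\in\mathring{\Gamma}_i$: if $\chi=d_0\chi_1 d_1\otimes_\mathcal{N}\cdots\in\Lambda(k)$ with $\chi_1\in\mathring{\Gamma}_j$, $j\neq i$, then
\[
b\,L_{\xi_2}(\chi)=b\xi_2\otimes_\mathcal{N} d_0\chi_1 d_1\otimes_\mathcal{N}\cdots
= b\cdot\xi_2\cdot d_0\otimes_\mathcal{N}\chi_1 d_1\otimes_\mathcal{N}\cdots,
\]
which is of the form $b_0'\chi_1'b_1'\otimes_\mathcal{N}\chi_2'b_2'\otimes_\mathcal{N}\cdots$ with $b_0'=b$, $\chi_1'=\xi_2$, $b_1'=d_0$. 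Definition~\ref{def_amalgamatedRadial_L_R} therefore applies directly and gives $L_{\xi_1}\bigl(bL_{\xi_2}(\chi)\bigr)=0$ since $\xi_1,\xi_2\in\mathring{\Gamma}_i$. No approximation in $L^2$-norm is needed (and indeed the Pimsner--Popa expansion \eqref{eq_amalgamated_right_base_expansion} is a finite exact sum, not an approximation). Once this simplification is made, your argument is clean and complete.
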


\begin{definition}[{\cite[Definition 2.1]{HaagerupMoeller2012}}]
\label{def_amalgamatedRadial_class_C}
Let $\mathcal{C}$ denote the set of functions $\phi \colon \mathbb{N}_0 \to \mathbb{C}$ for which the Hankel matrix 
\begin{align}
h = (\phi(i+j)-\phi(i+j+1))_{i,j \ge 0}
\end{align}
is of trace-class.
\end{definition}

We will use the following facts about maps from $\mathcal{C}$ which are proven in \cite[Lemmas 4.6 and 4.8, Remark 4.7, after Definition 2.1]{HaagerupMoeller2012}. Here $\|x\|_1 = Tr(|x|)$ is the trace-class norm for $x \in \mathbb{B}(l^2(\mathbb{N}_0))$ and we use the notation $(u \odot v)(t) = \langle t,v \rangle u$, for $u,v,t \in l^2(\mathbb{N}_0)$.

\begin{lemma}
\label{lem_amalgamatedRadial_properties_psi1_psi2}
Let $\phi \in \mathcal{C}$. Then 
\begin{align}
k = (\phi(i+j+1)-\phi(i+j+2))_{i,j \ge 0}
\end{align}
is of trace-class. Furthermore, $c = \lim_{n \to \infty} \phi(n)$ exists and
\begin{align}
\sum_{n=0}^\infty \left| \phi(n)-\phi(n+1) \right| & \le \|h\|_1 + \|k\|_1 < \infty.
\end{align}

Let now 
\begin{align}
\label{eq_C_norm_amalgamated}
\| \phi \|_\mathcal{C} = \|h\|_1 + \|k\|_1 + |c|
\end{align}
and set for $n \ge 0$
\begin{align}
\psi_1(n) & = \sum_{i=0}^\infty (\phi(n+2i)-\phi(n+2i+1)) \\
\psi_2(n) & = \psi_1(n+1).
\end{align}
Then 
\begin{align}
\phi(n) = \psi_1(n) + \psi_2(n) + c
\end{align}
and for $i,j \ge 0$ the entries $h_{i,j}$ and $k_{i,j}$ of $h$ and $k$ are given by 
\begin{align}
h_{i,j} & = \psi_1(i+j) - \psi_1(i+j+2) \\
k_{i,j} & = \psi_2(i+j) - \psi_2(i+j+2).
\end{align} 

\label{remark_amalgamatedRadial_hk}
Furthermore, there exist $x_i, y_i, z_i, w_i \in l^2(\mathbb{N}_0)$ such that 
\begin{align}
h & = \sum_{i=1}^\infty x_i \odot y_i & \sum_{i=1}^\infty \|x_i\|_2 \|y_i\|_2 & = \|h\|_1 \\
k & = \sum_{i=1}^\infty z_i \odot w_i & \sum_{i=1}^\infty \|z_i\|_2 \|w_i\|_2 & = \|k\|_1.
\end{align}

\label{lem_amalgamatedRadial_psi_sum}
Moreover, we have
\begin{align}
\psi_1(k+l) & = \sum_{i=1}^\infty \sum_{t=0}^\infty x_i(k+t) \overline{y_i(l+t)} \\
\psi_2(k+l) & = \sum_{i=1}^\infty \sum_{t=0}^\infty z_i(k+t) \overline{w_i(l+t)}.
\end{align}
\end{lemma}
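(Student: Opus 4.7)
The lemma bundles several elementary facts about functions in $\mathcal{C}$, and my plan is to establish them in the order stated, using only basic trace-class manipulations together with the singular value decomposition.

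Step 1 (trace-class property of $k$, and existence of $c$). I would observe that $k_{i,j} = \phi(i+j+1) - \phi(i+j+2) = h_{i+1,j}$, so $k = S^* h$ where $S$ denotes the unilateral shift on $l^2(\mathbb{N}_0)$. Since $S^*$ is a contraction, $\|k\|_1 \le \|h\|_1 < \infty$. Writing $a_n := \phi(n) - \phi(n+1)$, the principal diagonals of $h$ and $k$ pick out precisely $a_{2i}$ and $a_{2i+1}$, respectively. Multiplying by a diagonal unitary $U$ chosen so that the diagonal of $Uh$ is non-negative (and analogously for $k$) yields $\sum_i |a_{2i}| = \mathrm{Tr}(Uh) \le \|h\|_1$ and $\sum_i |a_{2i+1}| \le \|k\|_1$. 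Adding these gives $\sum_{n \ge 0}|a_n| \le \|h\|_1 + \|k\|_1$, and the telescoping identity $\phi(N) = \phi(0) - \sum_{n < N} a_n$ then shows that $c := \lim_N \phi(N)$ exists.

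Step 2 (decomposition $\phi = \psi_1 + \psi_2 + c$ and the entry formulas). Absolute convergence of $\sum a_n$ makes the series $\psi_1(n) = \sum_{i \ge 0} a_{n+2i}$ and $\psi_2(n) = \psi_1(n+1) = \sum_{i \ge 0} a_{n+2i+1}$ well-defined, and $\psi_1(n) + \psi_2(n)$ telescopes into the full tail $\sum_{m \ge n} a_m = \phi(n) - c$. The Hankel identities $h_{i,j} = \psi_1(i+j) - \psi_1(i+j+2)$ and $k_{i,j} = \psi_2(i+j) - \psi_2(i+j+2)$ are then immediate, as each right-hand side collapses to $a_{i+j}$, respectively $a_{i+j+1}$.

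Step 3 (rank-one decompositions and the series formula for $\psi_1$, $\psi_2$). I would invoke the singular value decomposition of the trace-class operator $h$: write $h = \sum_i s_i u_i \odot v_i$ with $s_i \ge 0$, with $(u_i)$ and $(v_i)$ orthonormal in $l^2(\mathbb{N}_0)$, and with $\sum_i s_i = \|h\|_1$. Setting $x_i = \sqrt{s_i}\, u_i$ and $y_i = \sqrt{s_i}\, v_i$ produces the required decomposition $h = \sum_i x_i \odot y_i$ with $\sum_i \|x_i\|_2 \|y_i\|_2 = \|h\|_1$, and the same construction applied to $k$ yields $z_i, w_i$. For the final identity, observe that $h_{p,q} = \sum_i x_i(p)\overline{y_i(q)}$, whence
\begin{align*}
\psi_1(k+l) \;=\; \sum_{t \ge 0} a_{k+l+2t} \;=\; \sum_{t \ge 0} h_{k+t,\, l+t} \;=\; \sum_{t \ge 0} \sum_{i \ge 1} x_i(k+t) \overline{y_i(l+t)}.
\end{align*}
The only non-routine point is the interchange of the two sums at the end; this is justified by Fubini, since each inner sum is bounded in absolute value by $\|x_i\|_2 \|y_i\|_2$ (Cauchy--Schwarz applied to shifted $l^2$-sequences) and $\sum_i \|x_i\|_2 \|y_i\|_2 = \|h\|_1 < \infty$. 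The identical argument on $k$ yields the formula for $\psi_2(k+l)$.
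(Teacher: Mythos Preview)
Your argument is correct in every step: the identification $k = S^*h$, the diagonal-trace estimate via a unimodular diagonal unitary, the telescoping for $\phi = \psi_1 + \psi_2 + c$, the SVD-based rank-one decomposition, and the Fubini justification (which indeed works since $\sum_t |x_i(k+t)\overline{y_i(l+t)}| \le \|x_i\|_2\|y_i\|_2$ by Cauchy--Schwarz and these bounds are summable in $i$). Note that the paper itself does not prove this lemma at all---it simply quotes the result from \cite[Lemmas~4.6 and~4.8, Remark~4.7]{HaagerupMoeller2012}---so you have supplied a self-contained proof where the paper relies on a citation; your argument is essentially what one finds in that reference.
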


\section{Technical lemmas}
\label{sec_amalgamated_technical}

We will start by proving some technical lemmas, which will be useful in the main proof.

\begin{lemma}
\label{lem_amalgamated_e0_orthogonal}
Let $\gamma \in \mathring{\Gamma}_i$ and $b \in \mathcal{N}$ then
\begin{align}
\gamma b = \sum_{j=0}^{n_i} \mathbb{E}_\mathcal{N}^i( \gamma b e_j^{i} ) e_j^{i*} = \sum_{j=1}^{n_i} \mathbb{E}_\mathcal{N}^i( \gamma b e_j^{i} ) e_j^{i*}.
\end{align}
\end{lemma}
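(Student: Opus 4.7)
The plan is to apply the Pimsner--Popa right expansion \eqref{eq_amalgamated_right_base_expansion} from the previous theorem directly to the element $x = \gamma b \in \mathcal{M}_i$. This immediately gives the first equality
\begin{align*}
\gamma b = \sum_{j=0}^{n_i} \mathbb{E}_\mathcal{N}^i(\gamma b e_j^{i}) e_j^{i*},
\end{align*}
so no work is required for the first $=$ sign beyond invoking the theorem.

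For the second equality I would then show that the $j=0$ summand vanishes. Recall that we chose $e_0^i = 1_\mathcal{N}^i$, so the $j=0$ term reduces to $\mathbb{E}_\mathcal{N}^i(\gamma b) \cdot 1$. Since $\mathbb{E}_\mathcal{N}^i$ is a conditional expectation onto $\mathcal{N}$, it is right $\mathcal{N}$-linear, giving $\mathbb{E}_\mathcal{N}^i(\gamma b) = \mathbb{E}_\mathcal{N}^i(\gamma)\, b$. By definition $\gamma \in \mathring{\Gamma}_i \subset \mathring{\mathcal{M}}_i = \ker(\mathbb{E}_\mathcal{N}^i)$, hence $\mathbb{E}_\mathcal{N}^i(\gamma) = 0$ and the $j=0$ term is $0$.

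There is no real obstacle here; the lemma is a bookkeeping fact packaging together the Pimsner--Popa basis expansion, the normalization $e_0^i = 1$, and right $\mathcal{N}$-linearity of $\mathbb{E}_\mathcal{N}^i$ on elements of $\mathring{\Gamma}_i$. The reason to state it separately is presumably so that later computations involving products of the form $b_0 L_\gamma b_1 \cdots$ (as in Observation \ref{obs_amalgamatedRadial_short_L} and the definition of $\mathcal{L}$) can freely rewrite $\gamma b$ in terms of the basis without picking up a scalar (i.e.\ $\mathcal{N}$-valued) contribution along $e_0^{i*}$, keeping every factor inside $\mathring{\mathcal{M}}_i$.
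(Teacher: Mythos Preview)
Your proposal is correct and follows essentially the same route as the paper: apply the Pimsner--Popa expansion \eqref{eq_amalgamated_right_base_expansion} for the first equality, then kill the $j=0$ term using $e_0^i=1$ together with right $\mathcal{N}$-linearity of $\mathbb{E}_\mathcal{N}^i$ and $\mathbb{E}_\mathcal{N}^i(\gamma)=0$. The paper phrases the last step slightly more concretely by writing $\gamma=e_j^i$ for some $j\neq 0$ and invoking the basis orthogonality $\mathbb{E}_\mathcal{N}^i(e_0^{i*}e_j^i)=0$, which is exactly what underlies your inclusion $\mathring{\Gamma}_i\subset\ker(\mathbb{E}_\mathcal{N}^i)$.
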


\begin{proof}
The first equality follows directly from equation \eqref{eq_amalgamated_right_base_expansion}. For the second equality observe that $\gamma \in \mathring{\Gamma}_i$ implies $\gamma = e_j^i$ for some $j \neq 0$ hence
\begin{align}
\mathbb{E}_\mathcal{N}^i( \gamma b e_0^{i} ) e_0^{i*} &= \mathbb{E}_\mathcal{N}^i( e_j^i b e_0^{i} ) e_0^{i*} \\
&=  \mathbb{E}_\mathcal{N}^i( e_j^i b 1 ) 1 \\
&= \mathbb{E}_\mathcal{N}^i( e_j^i ) b \\
&= 0 b = 0.
\end{align}
\end{proof}

Next we have to construct the building blocks for an explicit construction of the radial multiplier. Here we closely follow the outline of \cite[Section 4]{HaagerupMoeller2012} but generalize it to the amalgamated setting.

\begin{lemma}
\label{lem_amalgamatedRadial_rho_N_module_map}
The map $\rho \colon \tilde{\mathcal{L}} \to \mathbb{B}_\mathcal{N}(\mathcal{F}_\mathcal{N})$ defined by
\begin{align}
\rho(a) = \sum_{\gamma \in \mathring{\Gamma}} R_{\gamma^*} a R_{\gamma^*}^*
\end{align}
is a normal map that commutes with the right action of $\mathcal{N}$ on $\tilde{\mathcal{L}}$, respectively, $\mathbb{B}_\mathcal{N}(\mathcal{F}_\mathcal{N})$.
\end{lemma}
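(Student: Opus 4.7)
The plan is to proceed in three stages. First I would show that the sum defining $\rho(a)$ is pointwise well-defined on the dense span of $\bigcup_{k \ge 0} \Lambda(k)$. For an elementary tensor $\xi = b_0 \chi_1 b_1 \otimes_\mathcal{N} \cdots \otimes_\mathcal{N} \chi_k b_k \in \Lambda(k)$ with $\chi_k \in \mathring{\Gamma}_{i_k}$, write $\xi = \xi' \otimes_\mathcal{N} \chi_k b_k$. The definition of $R_{\gamma^*}^*$ gives $R_{\gamma^*}^* \xi = \xi' \mathbb{E}_\mathcal{N}(\chi_k b_k \gamma)$; amalgamated freeness over $\mathcal{N}$ forces this to vanish whenever $\gamma \in \mathring{\Gamma}_i$ with $i \neq i_k$, since $\chi_k b_k \in \mathring{\mathcal{M}}_{i_k}$ and $\gamma \in \mathring{\mathcal{M}}_i$ are then freely independent modulo $\mathcal{N}$. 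The sum over $\mathring{\Gamma}$ therefore reduces to at most the $n_{i_k}$ terms indexed by $\mathring{\Gamma}_{i_k}$, and on $L^2(\mathcal{N})$ the formula for $R_{\gamma^*}^*$ yields zero.

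Next I would verify the two commutation properties. The right-$\mathcal{N}$-module intertwining $\rho(L_m a L_n) = L_m \rho(a) L_n$ is immediate once one observes that $L_n$ commutes with both $R_{\gamma^*}$ and $R_{\gamma^*}^*$: left multiplication by $n$ touches only the leftmost tensor factor, while these operators modify only the rightmost one. For the less trivial membership $\rho(a) \in \mathbb{B}_\mathcal{N}(\mathcal{F}_\mathcal{N})$, I would verify $\rho(a)(\xi \cdot n) = \rho(a)(\xi) \cdot n$ directly on $\xi$ as above. Using that $a \in \mathbb{B}_\mathcal{N}$ commutes with the right action, the identity to prove reduces to
\begin{align*}
\sum_{\gamma \in \mathring{\Gamma}_{i_k}} \mathbb{E}_\mathcal{N}(\chi_k b_k n \gamma) \otimes_\mathcal{N} \gamma^* \;=\; \sum_{\gamma \in \mathring{\Gamma}_{i_k}} \mathbb{E}_\mathcal{N}(\chi_k b_k \gamma) \otimes_\mathcal{N} \gamma^* n .
\end{align*}
Observing that $\gamma^* n \in \mathring{\mathcal{M}}_{i_k}$ (as $\mathbb{E}_\mathcal{N}(\gamma^*) = 0$), Lemma \ref{lem_amalgamated_e0_orthogonal} yields $\gamma^* n = \sum_{\delta \in \mathring{\Gamma}_{i_k}} \mathbb{E}_\mathcal{N}(\gamma^* n \delta) \delta^*$ with the $e_0^{i_k} = 1$ contribution absent. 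Substituting, interchanging sums, and invoking the telescoping identity $\sum_\gamma \mathbb{E}_\mathcal{N}(x \gamma) \mathbb{E}_\mathcal{N}(\gamma^* y) = \mathbb{E}_\mathcal{N}(x y)$---which follows from \eqref{eq_amalgamated_right_base_expansion} applied to $x = \chi_k b_k$, followed by right multiplication by $y = n\delta$ and application of $\mathbb{E}_\mathcal{N}$---collapses the right-hand side to the left-hand side.

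Finally, for boundedness and normality I would rewrite $\rho$ as an amplification-compression. Define $V \colon \mathcal{F}_\mathcal{N} \to \bigoplus_{\gamma \in \mathring{\Gamma}} \mathcal{F}_\mathcal{N}$ by $V\xi = (R_{\gamma^*}^* \xi)_\gamma$; the calculation from the first stage combined with \eqref{eq_amalgamated_right_base_expansion} gives $V^* V = \sum_\gamma R_{\gamma^*} R_{\gamma^*}^* = 1 - e_{L^2(\mathcal{N})}$, so $V$ is a partial isometry. Then $\rho(a) = V^* \bigl(\bigoplus_\gamma a\bigr) V$, whence $\|\rho(a)\| \le \|a\|$ and $\rho$ is normal as the composition of the normal amplification $a \mapsto \bigoplus_\gamma a$ with the normal map $T \mapsto V^* T V$. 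I expect the main obstacle to be the telescoping identity of the second stage: it is where the Pimsner-Popa basis structure and the subtle behaviour of $\gamma^* n$ modulo $\mathcal{N}$ combine, and the other two stages are essentially routine once it is in hand.
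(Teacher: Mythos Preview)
Your argument is correct and takes a genuinely different route from the paper. The paper proceeds by a lengthy direct computation: for $a = b_0 L_{\xi_1}\cdots L_{\xi_k} b_k L_{\eta_1}^*\cdots L_{\eta_l}^*\tilde b_l \in \mathcal{L}^{k,l}$ and $\chi \in \Lambda(m)$ it explicitly evaluates $\rho(a)\chi$ step by step through the $R_{\gamma^*}^*$, $a$, $R_{\gamma^*}$ actions, and shows that the result coincides with $a\chi$ (or with $0$ in the degenerate case). The right-$\mathcal{N}$-linearity of $\rho(a)$ then follows in one line, since $a$ itself already lies in $\mathbb{B}_\mathcal{N}(\mathcal{F}_\mathcal{N})$ and right multiplication by $b \in \mathcal{N}$ changes none of the case distinctions in the computation. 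Normality is then invoked by weak-$*$ continuity, and no separate boundedness argument is given.

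Your approach is more structural: you bypass the explicit evaluation of $\rho(a)\chi$ entirely, instead (i) observing that the sum is locally finite, (ii) verifying $\rho(a)\in\mathbb{B}_\mathcal{N}(\mathcal{F}_\mathcal{N})$ via the Pimsner--Popa identities, and (iii) writing $\rho(a)=V^*\bigl(\bigoplus_\gamma a\bigr)V$ with $V$ a partial isometry to obtain boundedness and normality simultaneously. Stage (iii) is cleaner than what the paper does and makes complete boundedness of $\rho$ with constant $1$ transparent. One simplification of your Stage (ii): the displayed identity you aim for follows immediately from Lemma~\ref{lem_amalgamated_e0_orthogonal}, since both sides are simply the Pimsner--Popa expansion of $\chi_k b_k n$---there is no need to expand $\gamma^* n$ and invoke the telescoping identity, though that route is also valid. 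What the paper's computation buys in exchange for its length is the formula $\rho(a)\chi = a\chi$ itself, which is reused later (Lemma~\ref{lem_amalgamatedRadial_rhoLL_epsLL}) to identify $\rho^n(a) = aQ_{l+n}$; with your approach one would still need to establish that identity separately.
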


\begin{proof}
Let 
\begin{align}
a = b_0 L_{\xi_1} b_1 \cdots L_{\xi_k} b_k L^*_{\eta_1} \tilde{b}_1 \cdots L^*_{\eta_l} \tilde{b}_l \in \mathcal{L}^{k,l}
\end{align}
and 
\begin{align}
\chi = d_0 \chi_1 d_1 \otimes_\mathcal{N} \cdots \otimes_\mathcal{N} \chi_m d_m \in \Lambda(m)
\end{align}
then we have
\begin{align}
\rho(a)\chi &= \sum_{\gamma \in \mathring{\Gamma}} R_{\gamma^*} a R_{\gamma^*}^* \chi \\
&= \sum_{\gamma \in \mathring{\Gamma}} R_{\gamma^*} a \left(d_0 \chi_1 d_1 \otimes_\mathcal{N} \cdots \otimes_\mathcal{N} \chi_{m-1} d_{m-1} \mathbb{E}_\mathcal{N}(\chi_m d_m \gamma) \right).
\end{align}

Now
\begin{align}
& \sum_{\gamma \in \mathring{\Gamma}} R_{\gamma^*} a \left(d_0 \chi_1 d_1 \otimes_\mathcal{N} \cdots \otimes_\mathcal{N} \chi_{m-1} d_{m-1} \mathbb{E}_\mathcal{N}(\chi_m d_m \gamma) \right) \\
& = \sum_{\gamma \in \mathring{\Gamma}} R_{\gamma^*} b_0 L_{\xi_1} b_1 \cdots L_{\xi_k} b_k L^*_{\eta_1} \tilde{b}_1 \cdots L^*_{\eta_l} \tilde{b}_l \left(d_0 \chi_1 d_1 \otimes_\mathcal{N} \right. \\
& \qquad \qquad \left. \cdots \otimes_\mathcal{N} \chi_{m-1} d_{m-1} \mathbb{E}_\mathcal{N}(\chi_m d_m \gamma) \right) \\
& = \sum_{\gamma \in \mathring{\Gamma}} R_{\gamma^*} b_0 L_{\xi_1} b_1 \cdots L_{\xi_k} b_k c_l \left(d_l \chi_{l+1} d_{l+1} \otimes_\mathcal{N} \right. \\
& \qquad \qquad \left. \cdots \otimes_\mathcal{N} \chi_{m-1} d_{m-1} \mathbb{E}_\mathcal{N}(\chi_m d_m \gamma) \right) 
\end{align}
where $c_k \in \mathcal{N}$, $k=1 \dots l$, is iteratively defined by $c_0=1$, 
\begin{align}
c_k = \mathbb{E}_\mathcal{N}\left( \eta_{l-k+1} b_{l-k+1} c_{k-1} d_{k-1} \chi_k \right).
\end{align}

This gives us
\begin{align}
& \sum_{\gamma \in \mathring{\Gamma}} R_{\gamma} b_0 L_{\xi_1} b_1 \cdots L_{\xi_k} b_k c_l \left(d_l \chi_{l+1} d_{l+1} \otimes_\mathcal{N} \cdots \otimes_\mathcal{N} \chi_{m-1} d_{m-1} \mathbb{E}_\mathcal{N}(\chi_m d_m \gamma) \right)  \\
& = \sum_{\gamma \in \mathring{\Gamma}} R_{\gamma^*} \left( b_0 \xi_1 \otimes_\mathcal{N} \cdots \otimes_\mathcal{N} \xi_k b_k \otimes_\mathcal{N} c_l d_l \chi_{l+1} d_{l+1} \otimes_\mathcal{N} \right. \\
& \qquad \qquad \left. \cdots \otimes_\mathcal{N} \chi_{m-1} d_{m-1} \mathbb{E}_\mathcal{N}(\chi_m d_m \gamma) \right)
\end{align}
if $\xi_k$ and $\chi_{l+1}$ are from different $\mathring{\Gamma}_i$ as multiplying with an element from $\mathcal{N}$ does not change the Hilbert space and both side vanish if $\xi_k$ and $\chi_{l+1}$ are from the same $\mathring{\Gamma}_i$. Hence
\begin{align}
& \sum_{\gamma \in \mathring{\Gamma}} R_{\gamma^*} \left( b_0 \xi_1 \otimes_\mathcal{N} \cdots \otimes_\mathcal{N} \xi_k b_k \otimes_\mathcal{N} c_l d_l \chi_{l+1} d_{l+1} \otimes_\mathcal{N} \right. \\
& \qquad \qquad \left. \cdots \otimes_\mathcal{N} \chi_{m-1} d_{m-1} \mathbb{E}_\mathcal{N}(\chi_m d_m \gamma) \right) \\
& = \sum_{\gamma \in \mathring{\Gamma}} b_0 \xi_1 \otimes_\mathcal{N} \cdots \otimes_\mathcal{N} \xi_k b_k \otimes_\mathcal{N} c_l d_l \chi_{l+1} d_{l+1} \otimes_\mathcal{N} \\
& \qquad \qquad \cdots \otimes_\mathcal{N} \chi_{m-1} d_{m-1} \mathbb{E}_\mathcal{N}(\chi_m d_m \gamma) \otimes_\mathcal{N} \gamma^* \\
& = b_0 \xi_1 \otimes_\mathcal{N} \cdots \otimes_\mathcal{N} \xi_k b_k \otimes_\mathcal{N} c_l d_l \chi_{l+1} d_{l+1} \otimes_\mathcal{N} \\
& \qquad \qquad \cdots \otimes_\mathcal{N} \chi_{m-1} d_{m-1} \otimes_\mathcal{N} \sum_{\gamma \in \mathring{\Gamma}} \mathbb{E}_\mathcal{N}(\chi_m d_m \gamma) \gamma^* \\
& = b_0 \xi_1 \otimes_\mathcal{N} \cdots \otimes_\mathcal{N} \xi_k b_k \otimes_\mathcal{N} c_l d_l \chi_{l+1} d_{l+1} \otimes_\mathcal{N} \\*
& \qquad \qquad \cdots \otimes_\mathcal{N} \chi_{m-1} d_{m-1} \otimes_\mathcal{N} \chi_m d_m
\end{align}
in the case of $l<m-1$, which does not vanish as $\chi_{m-1}$ and $\chi_m$ are from different $\mathring{\Gamma}_i$ from the start. If instead $l = m-1$ we have
\begin{align}
& \sum_{\gamma \in \mathring{\Gamma}} R_{\gamma} \left( b_0 \xi_1 \otimes_\mathcal{N} \cdots \otimes_\mathcal{N} \xi_k b_k c_l d_l \mathbb{E}_\mathcal{N}(\chi_m d_m \gamma) \right) \\
& = \sum_{\gamma \in \mathring{\Gamma}} b_0 \xi_1 \otimes_\mathcal{N} \cdots \otimes_\mathcal{N} \xi_k b_k c_l d_l \mathbb{E}_\mathcal{N}(\chi_m d_m \gamma) \otimes_\mathcal{N} \gamma^* \\
& = b_0 \xi_1 \otimes_\mathcal{N} \cdots \otimes_\mathcal{N} \xi_k b_k c_l d_l \otimes_\mathcal{N} \sum_{\gamma \in \mathring{\Gamma}} \mathbb{E}_\mathcal{N}(\chi_m d_m \gamma) \gamma^* \\
& = b_0 \xi_1 \otimes_\mathcal{N} \cdots \otimes_\mathcal{N} \xi_k b_k c_l d_l \otimes_\mathcal{N} \chi_m d_m
\end{align}
if $\xi_k$ and $\chi_m$ are from different $\mathring{\Gamma}_i$ and vanishing otherwise.

Hence, in total we have
\begin{align}
\rho(a)\chi & = b_0 \xi_1 \otimes_\mathcal{N} \cdots \otimes_\mathcal{N} \xi_k b_k \otimes_\mathcal{N} c_l d_l \chi_{l+1} d_{l+1} \otimes_\mathcal{N} \\
& \qquad \qquad \cdots \otimes_\mathcal{N} \chi_{m-1} d_{m-1} \otimes_\mathcal{N} \chi_m d_m \\
& = a \chi
\end{align}
if $l<m-1$ or $l=m-1$ and $\xi_k$ and $\chi_m$ are from different $\mathring{\Gamma}_i$.

Now let $b \in \mathcal{N}$ then 
\begin{align}
\rho(a)(\chi b) &= a(\chi b) = a(\chi) b = (\rho(a) \chi)b
\end{align}
as multiplication from the right by $b$ neither changes $m$ and $l$ nor changes if $\xi_k$ and $\chi_m$ are from different $\mathring{\Gamma}_i$.

Normality follows from weak-$*$-continuity of multiplication and addition and hereby we can extend $\rho$ to all of $\tilde{\mathcal{L}}$ while preserving commutation with the right action of $\mathcal{N}$.
\end{proof}

As the next building block we construct the map $D_x$, closely following \cite[Section 4]{HaagerupMoeller2012} and omitting the proof as it is a simple calculation.

\begin{lemma}
\label{lem_amalgamatedRadial_D_N_module_map}
Let $x \in  l^\infty(\mathbb{N})$. The map $D_x \colon \mathcal{F}_\mathcal{N} \to \mathcal{F}_\mathcal{N}$ defined by
\begin{align}
D_x(\chi) = x_k \chi
\end{align}
for $\chi \in \Lambda(k)$ and extended by linearity is a $\mathcal{N}$-bimodule map and has the adjoint $D_{\bar{x}}$.
\end{lemma}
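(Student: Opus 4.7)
The plan is to exploit the natural grading on $\mathcal{F}_\mathcal{N}$ by tensor length. Recall that
\begin{align}
\mathcal{F}_\mathcal{N} = L^2(\mathcal{N}) \oplus \bigoplus_{k \ge 1, i_1 \neq \cdots \neq i_k} \mathring{H}_{i_1} \otimes_\mathcal{N} \cdots \otimes_\mathcal{N} \mathring{H}_{i_k}
\end{align}
is an orthogonal direct sum, and each element of $\Lambda(k)$ lies in the $k$-th summand (with $\Lambda(0) \subset L^2(\mathcal{N})$). I would first note that the span of $\bigcup_k \Lambda(k)$ is dense in $\mathcal{F}_\mathcal{N}$ and that $D_x$, defined on this span, acts as multiplication by the scalar $x_k$ on the $k$-th summand. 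Hence $D_x$ extends by continuity to a bounded operator on $\mathcal{F}_\mathcal{N}$ with $\|D_x\| \le \|x\|_\infty$.

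Next I would verify the $\mathcal{N}$-bimodule property. For $\chi = b_0 \chi_1 b_1 \otimes_\mathcal{N} \cdots \otimes_\mathcal{N} \chi_k b_k \in \Lambda(k)$ and $b \in \mathcal{N}$, both $b\chi = (bb_0)\chi_1 b_1 \otimes_\mathcal{N} \cdots \otimes_\mathcal{N} \chi_k b_k$ and $\chi b = b_0 \chi_1 b_1 \otimes_\mathcal{N} \cdots \otimes_\mathcal{N} \chi_k(b_k b)$ again belong to $\Lambda(k)$, so the grade is preserved. Since $x_k$ is a scalar,
\begin{align}
D_x(b \chi) = x_k (b\chi) = b(x_k \chi) = b\, D_x(\chi),
\end{align}
and analogously $D_x(\chi b) = D_x(\chi)\, b$. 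By linearity and continuity this extends to all of $\mathcal{F}_\mathcal{N}$.

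For the adjoint, I would use that elements of $\Lambda(k)$ and $\Lambda(l)$ with different $k$, or with distinct index tuples $(i_1,\dots,i_k)$ and $(j_1,\dots,j_l)$, lie in orthogonal summands of $\mathcal{F}_\mathcal{N}$. Consequently, for $\chi \in \Lambda(k)$ and $\chi' \in \Lambda(l)$, we have $\langle D_x \chi, \chi' \rangle = x_k \langle \chi, \chi' \rangle$, which vanishes unless $k=l$ and the index tuples match; in either case it equals $\langle \chi, \overline{x_l}\, \chi' \rangle = \langle \chi, D_{\bar x} \chi' \rangle$. Extending sesquilinearly and by continuity gives $D_x^* = D_{\bar x}$.

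There is no real obstacle here — the result is essentially a restatement of the fact that a bounded diagonal operator with respect to an orthogonal grading is a bimodule map for any action that preserves the grading. The only point requiring a little care is the passage from the set $\Lambda(k)$ to the closure of its linear span as the $k$-th summand of $\mathcal{F}_\mathcal{N}$, which is legitimate because of the density statement recorded just before Definition \ref{def_amalgamatedRadial_L_R}.
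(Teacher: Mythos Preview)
Your proof is correct and is precisely the simple calculation the paper has in mind; in fact the paper omits the proof entirely, stating only that it ``is a simple calculation.'' Your argument --- scalar multiplication on each length-$k$ summand, preservation of the grading under the left and right $\mathcal{N}$-actions, and the diagonal adjoint computation --- is exactly the expected verification.
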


	

The last necessary building block is the map $\epsilon$ which, following \cite[Section 4]{HaagerupMoeller2012}, is constructed in the next lemma.

\begin{lemma}
\label{lem_amalgamatedRadial_epsilon_N_module_map}
Let $q_i$ be the projection on the closed linear span in $\mathcal{F}_\mathcal{N}$ of the set
\begin{align}
\left\{ \chi \in \Lambda(k) : k \ge 1, \chi = d_0 \chi_1 d_1 \cdots \chi_k d_k, d_0, \dots, d_k \in \mathcal{N}, \chi_k \in \mathring{H}_i \right\}
\end{align}
and define $\epsilon \colon \tilde{\mathcal{L}} \to \mathbb{B}_\mathcal{N}(\mathcal{F}_\mathcal{N})$ by $\epsilon(a) = \sum_{i \in I} q_i a q_i$. Then $\epsilon(a)$ is a normal map that commutes with the right action of $\mathcal{N}$ on $\tilde{\mathcal{L}}$, respectively, $\mathbb{B}_\mathcal{N}(\mathcal{F}_\mathcal{N})$.
\end{lemma}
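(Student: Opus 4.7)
The plan is to verify, in turn, that the $q_i$ are pairwise orthogonal right-$\mathcal{N}$-linear projections, that the series $\sum_i q_i a q_i$ converges in the strong operator topology to an element of $\mathbb{B}_\mathcal{N}(\mathcal{F}_\mathcal{N})$, and finally that the resulting map $\epsilon$ is normal. In contrast to the argument for $\rho$ in Lemma \ref{lem_amalgamatedRadial_rho_N_module_map}, no word-by-word computation is required; everything reduces to a Bessel-type estimate using the orthogonality of the ranges of the $q_i$.

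First I would check that each $q_i$ commutes with right multiplication by $b \in \mathcal{N}$. On a spanning vector $d_0 \chi_1 d_1 \otimes_\mathcal{N} \cdots \otimes_\mathcal{N} \chi_k d_k$ of $\operatorname{ran}(q_i)$ with $\chi_k \in \mathring{H}_i$, right multiplication by $b$ simply replaces $d_k$ by $d_k b$, so the defining set is preserved. Moreover, for $i \neq j$ the summands $\mathring{H}_{i_1} \otimes_\mathcal{N} \cdots \otimes_\mathcal{N} \mathring{H}_{i_k}$ whose final index is $i$, respectively $j$, lie in orthogonal pieces of the direct-sum decomposition of $\mathcal{F}_\mathcal{N}$, so $q_i q_j = 0$, and $\sum_i q_i$ is the projection onto the closed span of $\bigcup_{k \ge 1} \Lambda(k)$.

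Second I would establish SOT-convergence of $\sum_i q_i a q_i$. Using the orthogonality from step one, for every $\xi \in \mathcal{F}_\mathcal{N}$ and every finite $F \subset I$,
\begin{align}
\Big\| \sum_{i \in F} q_i a q_i \xi \Big\|^2 = \sum_{i \in F} \|q_i a q_i \xi\|^2 \le \|a\|^2 \sum_{i \in F} \|q_i \xi\|^2 \le \|a\|^2 \|\xi\|^2,
\end{align}
so the net of partial sums is Cauchy and converges to an operator $\epsilon(a)$ with $\|\epsilon(a)\| \le \|a\|$. Each partial sum commutes with the right action of $\mathcal{N}$, since each $q_i$ does and $a \in \mathbb{B}_\mathcal{N}(\mathcal{F}_\mathcal{N})$, and this property passes to SOT limits, so $\epsilon(a) \in \mathbb{B}_\mathcal{N}(\mathcal{F}_\mathcal{N})$.

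For normality it suffices to show that $\epsilon$ is WOT-continuous on norm-bounded subsets of $\tilde{\mathcal{L}}$. If $a_\lambda \to a$ in WOT with $\|a_\lambda\| \le C$, then for $\xi, \eta \in \mathcal{F}_\mathcal{N}$,
\begin{align}
\langle \epsilon(a_\lambda) \xi, \eta \rangle = \sum_{i \in I} \langle a_\lambda q_i \xi, q_i \eta \rangle,
\end{align}
where each summand converges to $\langle a q_i \xi, q_i \eta \rangle$ and is dominated, independently of $\lambda$, by $C \|q_i \xi\|\,\|q_i \eta\|$, whose total is at most $C\|\xi\|\|\eta\|$ by Cauchy--Schwarz applied to the orthogonal families $\{q_i \xi\}$ and $\{q_i \eta\}$. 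Dominated convergence then gives $\langle \epsilon(a_\lambda) \xi, \eta \rangle \to \langle \epsilon(a) \xi, \eta \rangle$. The one mild subtlety I anticipate is handling a possibly infinite index set $I$; orthogonality of the ranges of the $q_i$ is precisely what powers both the SOT-convergence in step two and the interchange of sum and limit here, so it is the single ingredient that makes the whole argument go through uniformly in $|I|$.
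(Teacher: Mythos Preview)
Your argument is correct and follows the same line as the paper's own proof: both reduce everything to the observation that each $q_i$ is a right-$\mathcal{N}$-linear projection (because right multiplication by $b\in\mathcal{N}$ does not change the index of the last tensor factor), from which $\epsilon(a)\in\mathbb{B}_\mathcal{N}(\mathcal{F}_\mathcal{N})$ and normality follow. The paper dispatches normality in one line (``weak-$*$-continuity of multiplication and addition''), whereas you supply the missing uniform estimate via orthogonality of the $q_i$ and a Bessel/Cauchy--Schwarz bound; this is exactly the ingredient needed to justify the one-line claim when $I$ is infinite, so your version is a strict elaboration rather than a different approach. One cosmetic point: the phrase ``dominated convergence'' is slightly loose for nets, but your bound $C\|q_i\xi\|\,\|q_i\eta\|$ with summable total is precisely what makes the standard finite-part-plus-uniformly-small-tail argument go through, so the reasoning is sound.
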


\begin{proof}
To prove commutation with the right action of $\mathcal{N}$ observe that right multiplication of $\chi$ with $b \in \mathcal{N}$ will not change which $\mathring{H}_i$ $\chi_k d_k$ belongs to hence
\begin{align}
q_i (\chi b) = (q_i \chi) b
\end{align}
so $q_i \in \mathbb{B}_\mathcal{N}(\mathcal{F}_\mathcal{N})$ which implies $\epsilon(a) \in \mathbb{B}_\mathcal{N}(\mathcal{F}_\mathcal{N})$.

Normality follows from weak-$*$-continuity of multiplication and addition.
\end{proof}

The following lemma is modelled after \cite[Theorem 1.3]{ChristensenSinclair1989}, which states the similar result for the Hilbert space case, from which this follows as a simple observation.

\begin{lemma}
\label{lem_amalgamatedRadial_map_cb}
Let $\phi \colon \tilde{\mathcal{L}} \to \mathbb{B}_\mathcal{N}(\mathcal{F}_\mathcal{N})$ be given by
\begin{align}
\phi(a) = \sum_k u_k a v_k
\end{align}
for some bounded right-$\mathcal{N}$-module maps $u_k, v_k$ on $\mathcal{F}_\mathcal{N}$.
Then we have
\begin{align}
\| \phi \|_{cb} \le \| \sum_k u_k u_k^* \|^{1/2} \| \sum_k v_k^* v_k \|^{1/2}.
\end{align}
\end{lemma}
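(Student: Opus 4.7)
The plan is to follow the standard Christensen--Sinclair factorization through row/column operators, and then simply observe that the factorization preserves the right-$\mathcal{N}$-module structure. Concretely, I would form the (possibly infinite) row operator $U = (u_1, u_2, \dots) \colon \bigoplus_k \mathcal{F}_\mathcal{N} \to \mathcal{F}_\mathcal{N}$ and column operator $V = (v_1, v_2, \dots)^T \colon \mathcal{F}_\mathcal{N} \to \bigoplus_k \mathcal{F}_\mathcal{N}$. Then $\|U\|^2 = \|U U^*\| = \|\sum_k u_k u_k^*\|$ and $\|V\|^2 = \|V^* V\| = \|\sum_k v_k^* v_k\|$, and the finiteness of these two quantities is exactly what ensures $U$ and $V$ are bounded; weak (or strong) convergence of the defining sums takes care of any convergence issue.

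The key observation is that for $a \in \tilde{\mathcal{L}}$ one has the identity $\phi(a) = U \,\mathrm{diag}(a,a,a,\dots)\, V$, and, more generally, for each $n$ and each $A = (a_{ij}) \in M_n(\tilde{\mathcal{L}})$ the amplification satisfies
\begin{align}
(\phi \otimes \mathrm{id}_n)(A) = (U \otimes I_n)\bigl(A \otimes I_{\ell^2}\bigr)(V \otimes I_n),
\end{align}
where $A \otimes I_{\ell^2}$ denotes the diagonal action of $A$ on $\bigoplus_k (\mathcal{F}_\mathcal{N})^{\oplus n}$. Since diagonal amplification does not increase the norm, we obtain
\begin{align}
\|(\phi \otimes \mathrm{id}_n)(A)\| \le \|U\|\,\|A\|\,\|V\| = \Bigl\|\sum_k u_k u_k^*\Bigr\|^{1/2} \Bigl\|\sum_k v_k^* v_k\Bigr\|^{1/2} \|A\|,
\end{align}
and taking the supremum over $n$ and $A$ yields the claimed bound on $\|\phi\|_{cb}$.

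Finally, I would note that since each $u_k$ and $v_k$ commutes with the right action of $\mathcal{N}$, so do $U$ and $V$ (the right $\mathcal{N}$-action on $\bigoplus_k \mathcal{F}_\mathcal{N}$ is the coordinate-wise one), so the factorization takes place entirely inside the category of right-$\mathcal{N}$-module maps and the bound is compatible with the ambient setting. I do not expect any real obstacle: the only point requiring care is the convergence of the row/column operators, which is guaranteed by the hypothesis that $\sum_k u_k u_k^*$ and $\sum_k v_k^* v_k$ define bounded operators, and this is exactly the well-known Hilbert-space argument of \cite{ChristensenSinclair1989} applied verbatim.
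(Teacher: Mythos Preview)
Your proposal is correct and is precisely the argument the paper has in mind: the paper does not spell out a proof but simply remarks that the lemma is modelled on \cite[Theorem~1.3]{ChristensenSinclair1989} and follows from it as a simple observation. Your row/column factorization $\phi(a)=U\,\mathrm{diag}(a)\,V$ with $\|U\|^2=\|\sum_k u_k u_k^*\|$ and $\|V\|^2=\|\sum_k v_k^* v_k\|$ is exactly that observation made explicit, including the compatibility with the right-$\mathcal{N}$-module structure.
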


Combining the four preceeding lemmas we now can define maps that will be used to construct the radial multiplier explicitely and calculate bounds on its completely bounded norms.

\begin{lemma}
\label{lem_amalgamatedRadial_Phi1_Phi2_module_maps}
\label{lem_amalgamatedRadial_Phi1_norm}

For $x,y \in l^2(\mathbb{N}_0)$ and $a \in \tilde{\mathcal{L}}$ set 
\begin{align}
\Phi^{(1)}_{x,y} (a) & = \sum_{n=0}^\infty D_{(S^*)^n x} a D^*_{(S^*)^n y} + \sum_{n=1}^\infty D_{S^n x} \rho^n(a) D^*_{S^n y},
\end{align}
respectively,
\begin{align}
\Phi^{(2)}_{x,y} (a) & = \sum_{n=0}^\infty D_{(S^*)^n x} a D^*_{(S^*)^n y} + \sum_{n=1}^\infty D_{S^n x} \rho^{n-1}(\epsilon(a)) D^*_{S^n y}.
\end{align}

Then $\Phi^{(1)}_{x,y}, \Phi^{(2)}_{x,y} \colon \tilde{\mathcal{L}} \to \mathbb{B}_\mathcal{N}(\mathcal{F}_\mathcal{N})$ are well-defined, normal, completely bounded maps that commute with the right action of $\mathcal{N}$ on $\tilde{\mathcal{L}}$, respectively, $\mathbb{B}_\mathcal{N}(\mathcal{F}_\mathcal{N})$. Furthermore,
\begin{align}
\| \Phi^{(i)}_{x,y} \|_{cb} \le \|x\|_2 \|y\|_2,
\end{align}
for $i=1,2$.
\end{lemma}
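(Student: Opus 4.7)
The plan is to realize each $\Phi^{(i)}_{x,y}$ in the form $\sum_k u_k (\cdot) v_k$ and invoke Lemma \ref{lem_amalgamatedRadial_map_cb}. Iterating Lemma \ref{lem_amalgamatedRadial_rho_N_module_map} gives
\begin{align*}
\rho^n(a) = \sum_{\gamma_1, \dots, \gamma_n \in \mathring{\Gamma}} R_{\gamma_n^*} \cdots R_{\gamma_1^*} \, a \, R_{\gamma_1^*}^* \cdots R_{\gamma_n^*}^*,
\end{align*}
while Lemma \ref{lem_amalgamatedRadial_epsilon_N_module_map} provides $\epsilon(a) = \sum_{i \in I} q_i a q_i$. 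Substituting these into the definitions of $\Phi^{(1)}_{x,y}$ and $\Phi^{(2)}_{x,y}$ displays both maps as sums of the required form, with the $u_k$, $v_k$ built from the $D$-operators, the $R_{\gamma^*}$'s, and, in the $\Phi^{(2)}$ case, the projections $q_i$.

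The key computational identity is
\begin{align*}
\sum_{\gamma \in \mathring{\Gamma}} R_{\gamma^*} R_{\gamma^*}^* = P_{\ge 1},
\end{align*}
where $P_{\ge m}$ denotes the orthogonal projection of $\mathcal{F}_\mathcal{N}$ onto the closed linear span of $\bigcup_{k \ge m} \Lambda(k)$. For $\chi = \chi' \otimes_\mathcal{N} \chi_m d_m$ with $\chi_m \in \mathring{H}_i$, one checks that $R_{\gamma^*}^* \chi = 0$ unless $\gamma \in \mathring{\Gamma}_i$, and summing $\mathbb{E}_\mathcal{N}(\chi_m d_m \gamma) \gamma^*$ over $\gamma \in \mathring{\Gamma}_i$ reconstructs $\chi_m d_m$ by Lemma \ref{lem_amalgamated_e0_orthogonal} together with \eqref{eq_amalgamated_right_base_expansion}. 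Induction on $n$ then yields $\sum_{\gamma_1, \dots, \gamma_n} R_{\gamma_n^*} \cdots R_{\gamma_1^*} R_{\gamma_1^*}^* \cdots R_{\gamma_n^*}^* = P_{\ge n}$. For $\Phi^{(2)}$ the same induction carries through after noting that the $q_i$ are mutually orthogonal with $\sum_i q_i = P_{\ge 1}$.

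Using $D_z D_w = D_{zw}$ pointwise and the observation that $D_{S^n x}$ already annihilates $\Lambda(k)$ for $k < n$, the central projection drops out: $D_{S^n x} P_{\ge n} D_{S^n x}^* = D_{S^n x} D_{S^n x}^*$, which acts on $\chi \in \Lambda(m)$ as the scalar $|x(m-n)|^2$; analogously $D_{(S^*)^n x} D_{(S^*)^n x}^*$ acts as $|x(m+n)|^2$. Summing over $n$ produces, on $\chi \in \Lambda(m)$,
\begin{align*}
\sum_k u_k u_k^* \chi = \Bigl( \sum_{j \ge m} |x(j)|^2 + \sum_{j < m} |x(j)|^2 \Bigr) \chi = \|x\|_2^2 \, \chi,
\end{align*}
and symmetrically $\sum_k v_k^* v_k = \|y\|_2^2 \, \mathrm{id}$. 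Lemma \ref{lem_amalgamatedRadial_map_cb} then delivers $\|\Phi^{(i)}_{x,y}\|_{cb} \le \|x\|_2 \|y\|_2$. Convergence of the defining series in cb-norm is forced by this estimate applied to tails of $x$ and $y$; normality and right-$\mathcal{N}$-module-linearity of the limit are inherited from the building blocks (Lemmas \ref{lem_amalgamatedRadial_rho_N_module_map}, \ref{lem_amalgamatedRadial_D_N_module_map}, \ref{lem_amalgamatedRadial_epsilon_N_module_map}).

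The main subtlety I expect is that individual $R_{\gamma^*}$ factors commute with the \emph{left}, not the right, action of $\mathcal{N}$. The putative factor $u_k = D_{S^n x} R_{\gamma_n^*} \cdots R_{\gamma_1^*}$ is therefore not itself a right-$\mathcal{N}$-module map; only its aggregate over $\gamma$ (precisely the one appearing inside $\rho^n$) is. To apply Lemma \ref{lem_amalgamatedRadial_map_cb} cleanly, one regroups the $\gamma$-index into a single row/column operator $U_n \colon \mathcal{F}_\mathcal{N} \to \bigoplus_{\gamma_1, \dots, \gamma_n} \mathcal{F}_\mathcal{N}$, which is right-$\mathcal{N}$-linear as a whole, and applies a block-matrix version of the Christensen-Sinclair estimate to these. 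This reindexing is the only delicate bookkeeping step; everything else is the direct verification outlined above.
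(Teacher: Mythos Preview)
Your argument is correct and essentially identical to the paper's: both expand $\rho^n$ (resp.\ $\rho^{n-1}\circ\epsilon$) as a sum of $R$-conjugations, identify $\rho^n(1)$ with the projection $Q_n$ onto tensors of length $\ge n$ (your $P_{\ge n}$), and combine this with the diagonal identity $\sum_n D_{(S^*)^n x}D^*_{(S^*)^n x} + \sum_n D_{S^n x} Q_n D^*_{S^n x} = \|x\|_2^2\,1$ to invoke Lemma~\ref{lem_amalgamatedRadial_map_cb}. Your final paragraph on the right-$\mathcal{N}$-module subtlety is a point the paper glosses over; note, however, that the cb-estimate in Lemma~\ref{lem_amalgamatedRadial_map_cb} comes from the underlying Christensen--Sinclair bound and does not actually require the individual $u_k,v_k$ to be module maps, while the module property of $\Phi^{(i)}_{x,y}$ itself is obtained separately from Lemmas~\ref{lem_amalgamatedRadial_rho_N_module_map}, \ref{lem_amalgamatedRadial_D_N_module_map}, \ref{lem_amalgamatedRadial_epsilon_N_module_map}.
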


\begin{proof}
Firstly, we by an argument akin to \cite[Lemma 4.2]{HaagerupMoeller2012} can observe that 
\begin{align}
\label{eq_amalgamatedRadial_sumDD_summDQD}
\sum_{n=0}^\infty D_{(S^*)^nx} D^*_{(S^*)^nx} + \sum_{n=1}^\infty D_{S^nx} Q_n D^*_{S^nx} = \|x\|^2 1_{\mathbb{B}(\mathcal{F}_\mathcal{N})}.
\end{align}
Here $Q_n$ denotes the projection on the span of simple tensors of length greater or equal $n$ in $\mathcal{F}_\mathcal{N}$.

Similarly, if we restrict us to $a \in \mathcal{L}$ we can observe 
\begin{align}
\rho^n(a) &= \sum_{\zeta \in \Lambda(n)} R_\zeta a R^*_\zeta, & a \in \tilde{\mathcal{L}} \\
\rho^{n-1}(\epsilon(a)) &= \sum_{\zeta \in \Lambda(n-1)} R_\zeta q_i a q_i R_\zeta^*, & a \in \tilde{\mathcal{L}}
\end{align}
and
\begin{align}
\rho^n(1) &= Q_n \\
\rho^{n-1}(\epsilon(1)) &= \rho^{n-1}(Q_1) = Q_n.
\end{align}
which by normality can be extended to all of $\tilde{\mathcal{L}}$.

This, similarly to \cite[Lemma 4.3]{HaagerupMoeller2012} gives us 
\begin{align}
\sum_{n=0}^\infty D_{(S^*)^nx} D^*_{(S^*)^nx} + \sum_{n=1}^\infty \sum_{\zeta \in \Lambda(n)} D_{S^nx} R_\zeta R^*_\zeta D^*_{S^nx} = \|x\|^2 1_{\mathbb{B}(\mathcal{F}_\mathcal{N})}
\end{align}
and
\begin{align}
\sum_{n=0}^\infty D_{(S^*)^nx} D^*_{(S^*)^nx} + \sum_{n=1}^\infty \sum_{\zeta \in \Lambda(n-1)} \sum_{i \in I} D_{S^nx} R_\zeta q_i R^*_\zeta D^*_{S^nx} = \|x\|^2 1_{\mathbb{B}(\mathcal{F}_\mathcal{N})}
\end{align}
and hence we from Lemmas \ref{lem_amalgamatedRadial_rho_N_module_map}, \ref{lem_amalgamatedRadial_D_N_module_map} and \ref{lem_amalgamatedRadial_epsilon_N_module_map} get that $\Phi^{(i)}_{x,y}$ is a well-defined, normal, completely bounded right-$\mathcal{N}$-module map on $\tilde{\mathcal{L}}$ and from Lemma \ref{lem_amalgamatedRadial_map_cb} that
\begin{align}
\| \Phi^{(i)}_{x,y} \|_{cb} \le \|x\|_2 \|y\|_2
\end{align}
for $i=1,2$ and for all $x,y \in l^2(\mathbb{N}_0)$.
\end{proof}

\begin{definition}
If $a = b_0 L_{\xi_1} b_1 \cdots L_{\xi_k} b_k L^*_{\eta_1} \tilde{b}_1 \cdots L^*_{\eta_l} \tilde{b}_l \in \mathcal{L}^{k,l}$ we say that we are in
\begin{itemize}
\item \textbf{Case 1} if $k=0$ or $l=0$ or $k,l \ge 1$ and $\xi_k \in \mathring{\Gamma}_i, \eta_l \in \mathring{\Gamma}_j$ and $i \neq j$, $i,j \in I$,
\end{itemize}
respectively,
\begin{itemize}
\item \textbf{Case 2} if $k,l \ge 1$ and $\xi_k, \eta_l \in \mathring{\Gamma}_i$ for some $i \in I$.
\end{itemize}
\end{definition}

\begin{lemma}
\label{lem_amalgamatedRadial_rhoLL_epsLL}
We have for all $n \ge 0$ and $a \in \mathcal{L}^{k,l}$ that $\rho^n( a ) = a Q_{l+n}$ and $\epsilon( a ) = \rho( a )$ in Case 1, and, respectively, $\epsilon( a ) = a$ in Case 2.

\end{lemma}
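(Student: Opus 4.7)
The plan is to evaluate both sides of the asserted identities at an arbitrary tensor $\chi \in \Lambda(m)$ and argue level by level.

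For $\rho^n(a) = aQ_{l+n}$ I would induct on $n$. The base case $a = aQ_l$ holds because each of the $l$ annihilations $L^*_{\eta_j}$ inside $a$ lowers tensor length by one and kills $\Lambda(0)$, so $a\chi = 0$ whenever $\chi \in \Lambda(m)$ with $m<l$. For $n=1$ the computation already carried out in the proof of Lemma~\ref{lem_amalgamatedRadial_rho_N_module_map} gives $\rho(a)\chi = a\chi$ for $m \ge l+1$ and $\rho(a)\chi = 0$ for $m \le l$, since $R_{\gamma^*}^{*}$ lowers length by one and $a$ requires an input of length at least $l$ in order to survive. The inductive step applies the same analysis with $\rho^n(a) = aQ_{l+n}$ in place of $a$: $R_{\gamma^*}^{*}(\chi)$ has length $m-1$, so $\rho^n(a)R_{\gamma^*}^{*}(\chi)$ is nonzero only when $m \ge l+n+1$ and then equals $aR_{\gamma^*}^{*}(\chi)$, after which the sum over $\gamma \in \mathring{\Gamma}$ rebuilds $a\chi$ as in the $n=1$ step.

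For the $\epsilon$-identities I would first note that for $\chi \in \Lambda(m)$ with $m\ge 1$ only the single index $i = i_m$ (determined by $\chi_m \in \mathring{H}_{i_m}$) contributes to $q_i\chi$, so $\epsilon(a)\chi = q_{i_m}(a\chi)$, while $\epsilon(a)\chi = 0$ on $\Lambda(0)$. On $\Lambda(m)$ with $m \ge l+1$ the annihilations strip and the creations prepend letters only from the left, so the rightmost letter of $a\chi$ remains $\chi_m \in \mathring{H}_{i_m}$; hence $q_{i_m}(a\chi) = a\chi$, which matches $\rho(a)\chi = aQ_{l+1}\chi = a\chi$ in Case 1 and $a\chi$ in Case 2. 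On $\Lambda(m)$ with $m < l$ both $a\chi$ and $aQ_{l+1}\chi$ vanish trivially. The only remaining level is $m = l$, where $\rho(a)\chi = 0$. Tracing the chain of $\mathbb{E}_\mathcal{N}$'s produced by the successive $L^*_{\eta_j}$ forces $a\chi = 0$ unless $i(\eta_l) = i_1, \ldots, i(\eta_1) = i_l$, and when $a\chi$ is nonzero its rightmost letter is either absent (if $k=0$, so $a\chi \in \Lambda(0)$ and $q_{i_m}(a\chi) = 0$) or equal to $\xi_k \in \mathring{H}_{i(\xi_k)}$.

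To finish the $m = l$ case I would invoke Observation~\ref{obs_amalgamatedRadial_short_L} and reduce to the short forms $b_0 L_\gamma b_1$, $b_0 L_\gamma^* b_1$, and $b_0 L_\gamma b_1 L^*_\delta b_2$, on which $\eta_1$ and $\eta_l$ coincide (or are absent). Then in Case 1 the hypothesis $i(\xi_k) \ne i(\eta_l)$ gives $i(\xi_k) \ne i_l = i_m$, so $q_{i_m}(a\chi) = 0 = \rho(a)\chi$; in Case 2 the hypothesis $i(\xi_k) = i(\eta_l)$ gives $i(\xi_k) = i_m$ and so $q_{i_m}(a\chi) = a\chi$. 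The main obstacle is precisely the bookkeeping at level $m = l$: one has to identify the rightmost index of $a\chi$ and compare it with $i_m$ under the vanishing conditions forced by the chain of expectations, and the short-form reduction is exactly what makes this collapse to the single matching identity separating Cases 1 and 2.
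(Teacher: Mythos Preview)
Your overall approach---evaluate both sides on $\Lambda(m)$, handle $m\neq l$ by length-counting, and isolate the delicate case $m=l$---is exactly the route the paper takes (it simply defers to \cite[Lemma~4.4]{HaagerupMoeller2012} together with the observation that right multiplication by $b\in\mathcal{N}$ does not move an element out of $\mathring{H}_i$). Your argument for $\rho^n(a)=aQ_{l+n}$ and your treatment of the cases $m\neq l$ for $\epsilon$ are correct.

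The gap is at $m=l$. You correctly work out that, when $a\chi\neq 0$, the successive annihilations force $i(\eta_l)=i_1,\dots,i(\eta_1)=i_l=i_m$, and that the rightmost letter of $a\chi$ (for $k\ge 1$) is $\xi_k$; hence $q_{i_m}(a\chi)=a\chi$ if and only if $i(\xi_k)=i(\eta_1)$. You then try to bridge the mismatch with the Cases, which are stated in terms of $\eta_l$, by invoking Observation~\ref{obs_amalgamatedRadial_short_L} to reduce to short forms where $\eta_1=\eta_l$. This reduction is not valid here: Observation~\ref{obs_amalgamatedRadial_short_L} concerns the single-index sets $\mathcal{L}_i$ and $\mathcal{L}_i^{k,l}$, not the multi-index $\mathcal{L}^{k,l}$, and moreover it does not preserve the $(k,l)$-grading on which the Cases and the statement of the lemma depend. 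In fact the discrepancy you are fighting is an indexing slip in the surrounding text: the Cases should compare $\xi_k$ with the \emph{adjacent} annihilation $\eta_1$ (as in \cite[Lemma~4.4]{HaagerupMoeller2012}), and the proof of Lemma~\ref{lem_amalgamatedRadial_equiv_T_mult_T_cases} likewise needs $\delta_1$/$r_1$ in place of $\delta_l$/$r_l$. With that correction your own computation at $m=l$ already finishes the argument without any reduction.
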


\begin{proof}
Following the argument of \cite[Lemma 4.4]{HaagerupMoeller2012} and using that $\xi b \in \mathring{H}_i$ if $\xi \in \mathring{H}_i$ and $b \in \mathcal{N}$. 
\end{proof}

\begin{lemma}
\label{lem_amalgamatedRadial_Phi1_LL_eq_sum_abLL}
Let $k,l \ge 0$ and $a \in \mathcal{L}^{k,l}$. Then
\begin{align}
\Phi^{(1)}_{x,y} (a) = \left( \sum_{t=0}^\infty x(k+t) \overline{y(l+t)} \right) a
\end{align}
and, respectively, 
\begin{align}
\Phi^{(2)}_{x,y} (a) = \left\{ \begin{array}{ll}
\sum_{t=0}^\infty x(k+t) \overline{y(l+t)} a & \qquad \text{in Case 1} \\
\sum_{t=0}^\infty x(k+t-1) \overline{y(l+t-1)} a & \qquad \text{in Case 2}.
\end{array}
\right.
\end{align}
\end{lemma}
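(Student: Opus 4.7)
The plan is to use normality of all the constituent maps to reduce each identity to verification on a generic simple tensor $\chi \in \Lambda(m)$ with $m \ge 0$. Since in every case the right-hand side is a scalar multiple of $a$, the task becomes identifying the scalar coefficient of $a\chi$ on the left. The essential simplification comes from Lemma \ref{lem_amalgamatedRadial_rhoLL_epsLL}: it reduces $\rho^n(a)$ to $a Q_{l+n}$, and identifies $\rho^{n-1}(\epsilon(a))$ with $a Q_{l+n}$ in Case 1 and with $a Q_{l+n-1}$ in Case 2. The truncating projections $Q_r$ are precisely what control which $n$-terms survive in the resulting sums.

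Next I would unravel the action of the $D$-operators. Since $D_z$ multiplies elements of $\Lambda(m)$ by the scalar $z(m)$, the operators $D_{(S^*)^n x}$ and $D_{S^n x}$ contribute the factors $x(m+n)$ and $x(m-n)$ respectively, with the latter vanishing whenever $m<n$, and analogously for the adjoints with $y$ replaced by its conjugate. A preliminary observation is that $a \in \mathcal{L}^{k,l}$ annihilates $\Lambda(m)$ for $m<l$ (since $a$ must first consume $l$ tensor factors), so both sides vanish in that range; I may therefore assume $m \ge l$, in which case $a\chi \in \Lambda(m-l+k)$.

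For $\Phi^{(1)}_{x,y}$, writing out the two sums yields $\sum_{n=0}^\infty x(m-l+k+n)\overline{y(m+n)}$ and $\sum_{n=1}^{m-l} x(m-l+k-n)\overline{y(m-n)}$; after the substitutions $t = n+m-l$ and $s = m-l-n$ respectively, the two sums telescope into $\sum_{t=0}^\infty x(k+t)\overline{y(l+t)}$, which is the claimed formula. For $\Phi^{(2)}_{x,y}$ in Case 1 nothing new is needed, since $\rho^{n-1}(\epsilon(a)) = \rho^n(a)$ there, whence $\Phi^{(2)}_{x,y}$ and $\Phi^{(1)}_{x,y}$ agree on such $a$.

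The only genuinely new step arises in Case 2, where the projection $Q_{l+n-1}$ replaces $Q_{l+n}$, widening the range of the second sum to $1 \le n \le m-l+1$. A parallel re-indexing then produces the shifted sum $\sum_{t=0}^\infty x(k+t-1)\overline{y(l+t-1)}$, with the assumption $k,l \ge 1$ in Case 2 ensuring that all indices remain non-negative. I do not expect a conceptual obstacle; the care required lies entirely in the bookkeeping of which index ranges survive once the $Q_r$'s cut things off, and the main thing to watch is that the one-step shift in the projection index in Case 2 translates directly into the one-step shift in the arguments of $x$ and $y$ on the right-hand side.
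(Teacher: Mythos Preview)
Your proposal is correct and follows essentially the same approach as the paper: verify the identity on simple tensors $\chi \in \Lambda(m)$, invoke Lemma~\ref{lem_amalgamatedRadial_rhoLL_epsLL} to replace $\rho^n(a)$ and $\rho^{n-1}(\epsilon(a))$ by $a$ composed with the appropriate length-truncation projection, and then carry out the index bookkeeping for the $D$-operators. The paper's proof simply defers those computations to \cite[Lemma~4.5]{HaagerupMoeller2012}, noting that the presence of the $\mathcal{N}$-coefficients does not affect tensor lengths; you have written out the re-indexing explicitly, which is fine. One minor remark: the reduction to simple tensors is a density argument rather than a normality one, but this does not affect the substance.
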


\begin{proof}
We prove this by showing that both sides act similarly on all simple tensors in $\mathcal{F}_\mathcal{N}$.
Following the argument of \cite[Lemma 4.5]{HaagerupMoeller2012} and using that $\xi b \in \mathring{H}_i$ if $\xi \in \mathring{H}_i$ and $b \in \mathcal{N}$ as well as the fact that the behavior of $D$ only depends on the length of simple tensors in $\mathcal{F}_\mathcal{N}$ but not on the elements from $\mathcal{N}$ contained in them.
\end{proof}

\section{Proof of the main result}
\label{sec_amalgamated_main_proof}
Now we have constructed all the necessary tools to prove the main result of this paper.

\begin{lemma}
\label{lem_amalgamatedRadial_equiv_T_mult_T_cases}
Let $T \colon \tilde{\mathcal{L}} \to \tilde{\mathcal{L}}$ be a bounded, linear, normal right-$\mathcal{N}$-module map, and let $\phi \colon \mathbb{N}_0 \to \mathbb{C}$. The following statements are equivalent.
\begin{enumerate}[(a)]
\item For all $n \ge 1$, $i_1, \dots i_n \in I$ with $i_1 \neq i_2 \neq \dots \neq i_n$ and $a_j \in \tilde{\mathcal{L}}_{i_j}$, we have $T(1) = \phi(0) 1$ and $T(a_1 a_2 \dotsm a_n) = \phi(n) a_1 a_2 \dotsm a_n$.
\item For all $k,l \ge 0$ and $a \in \mathcal{L}^{k,l}$ we have
\begin{align}
T( a ) = \left\{ \begin{array}{ll}
\phi(k+l) a & \qquad \text{in Case 1} \\
\phi(k+l-1) a & \qquad \text{in Case 2}.
\end{array} \right.
\end{align}
\end{enumerate}
\end{lemma}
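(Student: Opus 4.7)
The plan is to establish the equivalence by translating between the product-decomposition form in (a) and the ordered normal form in (b); the two cases in (b) correspond precisely to whether the junction between the creation and annihilation blocks of a normal-form element can be absorbed into a single $\tilde{\mathcal{L}}_i$-factor (Case 2, $n = k+l-1$) or not (Case 1, $n = k+l$).

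For (a) $\Rightarrow$ (b), I fix $a = b_0 L_{\xi_1} b_1 \cdots L_{\xi_k} b_k L^*_{\eta_1} \tilde{b}_1 \cdots L^*_{\eta_l} \tilde{b}_l \in \mathcal{L}^{k,l}$. Without loss of generality $a \neq 0$, which forces consecutive $\xi_j$'s (and consecutive $\eta_j$'s) to lie in distinct $\mathring{\Gamma}_i$'s, since $L_{\xi_j} b L_{\xi_{j+1}}$ vanishes identically on $\mathcal{F}_\mathcal{N}$ when both letters share an index (left $\mathcal{N}$-multiplication preserves each $\mathring{H}_i$, so the inner $L$ creates a factor in $\mathring{H}_i$ that the outer $L$ cannot cap). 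Next, I exhibit an explicit factorisation $a = a_1 \cdots a_n$ with $a_j \in \tilde{\mathcal{L}}_{i_j}$ and $i_1 \neq \cdots \neq i_n$, absorbing each intermediate $\mathcal{N}$-scalar into the adjacent letter. In Case 1 each $L$- or $L^*$-letter produces its own $\tilde{\mathcal{L}}_{i_j}$-factor and the alternation is automatic, giving $n = k+l$; in Case 2 the junction pair $L_{\xi_k} b_k L^*_{\eta_1}$ sits in a common $\tilde{\mathcal{L}}_i$ (both $\xi_k$ and $\eta_1$ lying in $\mathring{\Gamma}_i$) and can be merged into a single factor, giving $n = k+l-1$. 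Applying (a) yields the formulas asserted in (b).

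For (b) $\Rightarrow$ (a), by normality and right-$\mathcal{N}$-linearity of $T$ it suffices to verify the identity when each $a_j$ lies in $\mathcal{L}_{i_j}$, and by Observation \ref{obs_amalgamatedRadial_short_L} I may further reduce to short forms $b_0 L_\gamma b_1$, $b_0 L^*_\gamma b_1$, or $b_0 L_\gamma b_1 L^*_\delta b_2$ with $\gamma, \delta \in \mathring{\Gamma}_{i_j}$. Expanding $a_1 \cdots a_n$ as a sum over choices of short forms and invoking the vanishing $L^*_\eta b L_\xi = 0$ for $\eta, \xi$ in distinct $\mathring{\Gamma}$'s (a consequence of freeness via $\mathbb{E}_\mathcal{N}(\eta^* b \xi) = 0$), the only non-vanishing terms are those with a single switching point from creations to annihilations. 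Each surviving term lies in some $\mathcal{L}^{k,l}$ with either $k+l = n$ and falls into Case 1 (the switching happens between two consecutive factors $a_{j^*}$ and $a_{j^*+1}$, whose distinct indices place the two junction letters in different $\mathring{\Gamma}$'s), or $k+l = n+1$ and falls into Case 2 (the switching happens inside a single mixed short form, whose $L$- and $L^*$-letters share $\mathring{\Gamma}_{i_{j^*}}$). Hypothesis (b) assigns the value $\phi(n)$ in either configuration, recovering $T(a_1 \cdots a_n) = \phi(n) a_1 \cdots a_n$.

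The main obstacle is the bookkeeping of the $\mathcal{N}$-scalars $b_j, \tilde{b}_j$ across the decomposition/merging step on one side and across the expansion of products on the other, especially at the junction where the two cases part ways. This is streamlined by the Pimsner-Popa basis expansion underlying Observation \ref{obs_amalgamatedRadial_short_L} together with Lemma \ref{lem_amalgamated_e0_orthogonal}, which allow any element of $\mathcal{L}_i^{k,l}$ to be rewritten with all $\mathcal{N}$-scalars absorbed into Pimsner-Popa coefficients attached to the $L$- and $L^*$-letters, so that each manipulation reduces to verifying a statement about reduced products of letters.
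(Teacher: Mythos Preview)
Your proposal is correct and follows essentially the same route as the paper. The paper disposes of (a)\,$\Rightarrow$\,(b) by citing \cite[Lemma~5.2]{HaagerupMoeller2012}, while for (b)\,$\Rightarrow$\,(a) it argues, as you do, that by normality and linearity one may take each $a_j$ to be a short generator of $\mathcal{L}_{i_j}$, so that the product collapses (via the vanishing of cross-index patterns $L^*_\eta b L_\xi$) to a single element of $\mathcal{L}^{k,l}$ with $n=k+l$ in Case~1 and $n=k+l-1$ in Case~2; your write-up is simply more explicit about why only one switching point survives and how the mixed short form accounts for Case~2.
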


\begin{proof}
(a) implies (b) follows by the same argument as in the proof of \cite[Lemma 5.2]{HaagerupMoeller2012} as $\mathcal{L} \subset \tilde{\mathcal{L}}$.

To prove (b) implies (a) let $a \in \tilde{\mathcal{L}}$ which by definition and right-$\mathcal{N}$-module property is in the weak closure of linear combinations of $\mathcal{L}$. Hence it by normality is enough to check that $T(1) = \phi(0) 1$ and $T(a_1 \dotsm a_n) = \phi(n) a_1 \dotsm a_n$ whenever $n \ge 1$ and $a = a_1 \cdots a_n \in \mathcal{L}$.

Hence $a = a_1 \dotsm a_n = b_0 L_{\gamma_1} b_1 \cdots L_{\gamma_k} b_k L^*_{\delta_1} \tilde{b}_1 \cdots L^*_{\delta_l} \tilde{b}_l$ for some $\gamma_j \in \mathring{\Gamma}_{i_j},$\linebreak $\delta_s \in \mathring{\Gamma}_{r_s}$, $i_j \neq i_{j+1}, r_s \neq r_{s+1}$ and $i_1, \dots, i_k, r_1, \dots, r_l \in I$.

If we are in Case 1, we have $i_k \neq r_l$. Hence neighboring elements on the right hand side are from different $\mathcal{L}_i$ and thus $n=k+l$. If we are in Case 2, we have $i_k = r_l$. Hence $L_{\gamma_k} b_k L^*_{\delta_l} \in \mathcal{L}_{i_k}$, thus $n=k+l-1$. Now (b) gives the result for $k \ge 1$ or $l \ge 1$.

Moreover, the $k=l=0$ case of (b) gives $T(b)=\phi(0)b$ for $b \in \mathcal{N}$ and hence $T(1)=\phi(0) 1$.
\end{proof}

Next, we explicitly construct such a map $T$.
\begin{definition}
Let $\phi \in \mathcal{C}$. Define maps
\begin{align}
T_1 = \sum_{i=1}^\infty \Phi^{(1)}_{x_i,y_i} 
\quad \text{and} \quad
T_2 = \sum_{i=1}^\infty \Phi^{(2)}_{z_i,w_i}
\end{align}
where $\Phi^{(\cdot)}_{x,y}$ are as in Lemma \ref{lem_amalgamatedRadial_Phi1_Phi2_module_maps}, $\psi_1, \psi_2$ as in Lemma \ref{lem_amalgamatedRadial_properties_psi1_psi2}, and $x_i, y_i, z_i, w_i$ as in Remark \ref{remark_amalgamatedRadial_hk}.
Moreover, define  $T = T_1 + T_2 + c \operatorname{Id}$ where $\operatorname{Id}$ denotes the identity operator on $\tilde{\mathcal{L}}$, and $c = \lim_{n \to \infty} \phi(n)$.
\end{definition}

First we prove that these maps are well-defined, normal, and completely bounded, afterwards we will prove that the maps exhibit the right behavior.

\begin{lemma}
\label{lem_amalgamatedRadial_T_cb_norm}
The maps $T_1, T_2$ and $T$ are normal and completely bounded right-$\mathcal{N}$-module maps and $\| T \|_{cb} \le \|\phi\|_{\mathcal{C}}$.
\end{lemma}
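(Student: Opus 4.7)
The plan is to reduce everything to the bounds already established in Lemma \ref{lem_amalgamatedRadial_Phi1_Phi2_module_maps} and to the summability statements in Lemma \ref{lem_amalgamatedRadial_properties_psi1_psi2}. Concretely, Lemma \ref{lem_amalgamatedRadial_Phi1_norm} gives
\begin{align}
\|\Phi^{(1)}_{x_i,y_i}\|_{cb} \le \|x_i\|_2 \|y_i\|_2, \qquad \|\Phi^{(2)}_{z_i,w_i}\|_{cb} \le \|z_i\|_2 \|w_i\|_2,
\end{align}
and by the choice of $x_i,y_i,z_i,w_i$ in Remark \ref{remark_amalgamatedRadial_hk} the series $\sum_i \|x_i\|_2\|y_i\|_2 = \|h\|_1$ and $\sum_i \|z_i\|_2 \|w_i\|_2 = \|k\|_1$ both converge. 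Hence the partial sums defining $T_1$ and $T_2$ form Cauchy sequences in the cb-norm on bounded right-$\mathcal{N}$-module maps $\tilde{\mathcal{L}} \to \mathbb{B}_\mathcal{N}(\mathcal{F}_\mathcal{N})$, so $T_1$ and $T_2$ are well-defined and completely bounded with
\begin{align}
\|T_1\|_{cb} \le \|h\|_1, \qquad \|T_2\|_{cb} \le \|k\|_1.
\end{align}

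Next I would verify that the relevant structural properties pass to the cb-norm limit. The right-$\mathcal{N}$-module property is an algebraic identity $\Phi(a b) = \Phi(a) b$ for $b \in \mathcal{N}$ which is preserved under pointwise norm convergence, and in particular under cb-norm convergence. Normality (ultraweak-to-ultraweak continuity) likewise passes to uniform (hence cb-norm) limits of normal maps: for a bounded net $a_\alpha \to a$ ultraweakly and $\omega$ in the predual, one estimates
\begin{align}
|\omega(T_1(a_\alpha) - T_1(a))| \le |\omega(S_n(a_\alpha) - S_n(a))| + \|\omega\|\,\|T_1 - S_n\|\,(\|a_\alpha\| + \|a\|),
\end{align}
where $S_n$ is the $n$-th partial sum, and chooses $n$ large before exploiting ultraweak continuity of $S_n$. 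The same argument applies to $T_2$. Finally, $c\operatorname{Id}$ is trivially normal, completely bounded with cb-norm $|c|$, and a right-$\mathcal{N}$-module map.

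Combining these observations, $T = T_1 + T_2 + c\operatorname{Id}$ is a normal, completely bounded right-$\mathcal{N}$-module map, and the triangle inequality together with the definition \eqref{eq_C_norm_amalgamated} of $\|\phi\|_\mathcal{C}$ gives
\begin{align}
\|T\|_{cb} \le \|T_1\|_{cb} + \|T_2\|_{cb} + |c| \le \|h\|_1 + \|k\|_1 + |c| = \|\phi\|_\mathcal{C}.
\end{align}
There is essentially no obstacle of substance here: the whole content of the lemma is packaged in Lemma \ref{lem_amalgamatedRadial_Phi1_Phi2_module_maps} and the decomposition of $h,k$ from Remark \ref{remark_amalgamatedRadial_hk}; the only thing to be a little careful about is the preservation of normality under cb-norm limits, which is the observation above. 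The real work of the paper happens elsewhere (constructing $\rho,\epsilon,D_x$ and extracting the correct scalar action from $\Phi^{(i)}_{x,y}$), so this lemma is the clean bookkeeping step that yields the stated cb-norm bound for the multiplier.
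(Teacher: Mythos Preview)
Your proposal is correct and follows exactly the approach the paper takes: the paper's proof simply says the lemma follows directly from Lemma~\ref{lem_amalgamatedRadial_Phi1_norm} and Remark~\ref{remark_amalgamatedRadial_hk}, and you have spelled out the same argument in detail, including the (tacit) passage of normality and the right-$\mathcal{N}$-module property to the cb-norm limit. There is nothing to add.
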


\begin{proof}
This lemma follows directly from Lemmas \ref{remark_amalgamatedRadial_hk} and \ref{lem_amalgamatedRadial_Phi1_norm}.
\end{proof}

\begin{lemma}
\label{lem_amalgamatedRadial_T1_T2_psi1_psi2}
\label{lem_amalgamatedRadial_T_LL_T_phi}
For $T_1, T_2$ defined as above, $k,l \ge 0$ and $a \in \mathcal{L}^{k,l}$ we have $T_1( a ) = \psi_1(k+l) a$,
respectively,
\begin{align}
\label{eq_amalgamatedRadial_T2_psi2}
T_2(a) = \left\{ \begin{array}{ll}
\psi_2(k+l) a & \qquad \text{in Case 1} \\
\psi_2(k+l-2) a & \qquad \text{in Case 2}.
\end{array} \right.
\end{align}

Furthermore, for $T$ defined as above we have
\begin{align}
T( a ) = \left\{ \begin{array}{ll}
\phi(k+l) a & \qquad \text{in Case 1} \\
\phi(k+l-1) a & \qquad \text{in Case 2.}
\end{array} \right.
\end{align}
\end{lemma}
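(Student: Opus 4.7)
The plan is to reduce the claim about $T_1$, $T_2$, $T$ to the already-established per-summand identity in Lemma \ref{lem_amalgamatedRadial_Phi1_LL_eq_sum_abLL}, and then to assemble the pieces using the expansion of $\phi$ in terms of $\psi_1$, $\psi_2$, $c$ provided by Lemma \ref{lem_amalgamatedRadial_properties_psi1_psi2}.

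First I would treat $T_1$. Fix $a \in \mathcal{L}^{k,l}$. By Lemma \ref{lem_amalgamatedRadial_Phi1_LL_eq_sum_abLL} one has, for every $i$,
\begin{align}
\Phi^{(1)}_{x_i,y_i}(a) = \Bigl( \sum_{t=0}^\infty x_i(k+t) \overline{y_i(l+t)} \Bigr) a.
\end{align}
Summing over $i$ and invoking the absolute convergence that comes from $\sum_i \|x_i\|_2 \|y_i\|_2 = \|h\|_1 < \infty$ (so that the double sum in $i$ and $t$ may be exchanged), Lemma \ref{lem_amalgamatedRadial_properties_psi1_psi2} identifies the scalar factor as $\psi_1(k+l)$, giving $T_1(a) = \psi_1(k+l) a$. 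The argument for $T_2$ is identical in Case 1, yielding $T_2(a) = \psi_2(k+l) a$; in Case 2 the second formula in Lemma \ref{lem_amalgamatedRadial_Phi1_LL_eq_sum_abLL} instead produces $\sum_{t} z_i(k{+}t{-}1)\overline{w_i(l{+}t{-}1)}$ in each summand, which by the same identity from Lemma \ref{lem_amalgamatedRadial_properties_psi1_psi2} evaluates to $\psi_2(k+l-2)$.

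Then I would assemble $T = T_1 + T_2 + c\operatorname{Id}$. In Case 1 this immediately gives
\begin{align}
T(a) = \bigl( \psi_1(k+l) + \psi_2(k+l) + c \bigr) a = \phi(k+l)\, a,
\end{align}
using $\phi = \psi_1 + \psi_2 + c$. In Case 2 one obtains
\begin{align}
T(a) = \bigl( \psi_1(k+l) + \psi_2(k+l-2) + c \bigr) a,
\end{align}
and the relation $\psi_2(n) = \psi_1(n+1)$ from Lemma \ref{lem_amalgamatedRadial_properties_psi1_psi2} rewrites this as $(\psi_1(k+l-1) + \psi_2(k+l-1) + c) a = \phi(k+l-1) a$, which is the desired formula.

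There is no real obstacle here; the content sits entirely in the earlier lemmas. The only point requiring a bit of care is the exchange of the sum over $i$ with the sum over $t$ inside each $\Phi$, but this is immediate from the $\ell^1$--$\ell^2$ bounds $\sum_i \|x_i\|_2 \|y_i\|_2 < \infty$ and the Cauchy--Schwarz estimate $|\sum_t x_i(k+t)\overline{y_i(l+t)}| \le \|x_i\|_2 \|y_i\|_2$, and symmetrically for $z_i, w_i$. Apart from this, the proof is a direct combination of Lemmas \ref{lem_amalgamatedRadial_properties_psi1_psi2} and \ref{lem_amalgamatedRadial_Phi1_LL_eq_sum_abLL} together with the arithmetic identity above.
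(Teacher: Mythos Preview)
Your proposal is correct and follows exactly the approach the paper intends: the paper's own proof merely reads ``Following the argument of \cite[Lemma 5.5]{HaagerupMoeller2012}'', and what you have written is precisely that argument spelled out, combining Lemmas \ref{lem_amalgamatedRadial_Phi1_LL_eq_sum_abLL} and \ref{lem_amalgamatedRadial_properties_psi1_psi2} and then using $\psi_2(n)=\psi_1(n+1)$ to simplify in Case~2.
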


\begin{proof}
Following the argument of \cite[Lemma 5.5]{HaagerupMoeller2012}.
\end{proof}

Note that by Lemma \ref{lem_amalgamatedRadial_equiv_T_mult_T_cases} this implies that $T(1) = \phi(0) 1$ and that for $n \ge 1$,  $T_\phi(a_1 a_2 \dotsm a_n) = \phi(n) a_1 a_2 \dotsm a_n$.

Finitely we have to check that combining Lemmas \ref{lem_amalgamatedRadial_T_cb_norm} and \ref{lem_amalgamatedRadial_T_LL_T_phi} is enough to prove Theorem \ref{thm_amalgamatedRadial_main_theorem}.

\begin{lemma}
\label{lem_amalgamatedRadial_M_in_L}
The von Neumann algebra $\mathcal{M}_i$ is isomorphic to a subalgebra of $\tilde{\mathcal{L}}_i$.
\end{lemma}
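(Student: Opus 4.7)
The plan is to exhibit the inclusion via the standard left regular representation of $\mathcal{M}_i$ on $H_i = L^2(\mathcal{M}_i,\tau_i)$. Define $\lambda \colon \mathcal{M}_i \to \mathbb{B}(H_i)$ by $\lambda(x)\xi = x\xi$. Since left and right multiplications commute on $L^2(\mathcal{M}_i,\tau_i)$, one has $\lambda(\mathcal{M}_i) \subseteq \mathbb{B}_\mathcal{N}(H_i)$, and $\lambda$ is automatically a unital normal $*$-homomorphism. Faithfulness is free because $\mathcal{M}_i$ is a $\mathrm{II}_1$-factor. Consequently the only real content of the lemma is the set-theoretic inclusion $\lambda(\mathcal{M}_i) \subseteq \tilde{\mathcal{L}}_i$.

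From the Pimsner-Popa identity $1 = \sum_{j=0}^{n_i} e_j^i e_\mathcal{N}^i e_j^{i*}$ applied to $x \in \mathcal{M}_i$ one gets the left-handed expansion $x = \sum_{j=0}^{n_i} e_j^i \mathbb{E}_\mathcal{N}^i(e_j^{i*} x)$. Since $e_0^i = 1$, this exhibits every $x \in \mathcal{M}_i$ as a finite right-$\mathcal{N}$-linear combination of $1$ and the elements of $\mathring{\Gamma}_i$. By linearity, it is therefore enough to check that $\lambda(b) \in \tilde{\mathcal{L}}_i$ for $b \in \mathcal{N}$ and $\lambda(\gamma) \in \tilde{\mathcal{L}}_i$ for every $\gamma \in \mathring{\Gamma}_i$. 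The case $b \in \mathcal{N}$ is immediate: $\lambda(b) = b \in \mathcal{L}_i^{0,0}$.

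For $\gamma \in \mathring{\Gamma}_i$ I propose the explicit formula
\begin{align*}
\lambda(\gamma) = L_\gamma \;+\; \sum_{\gamma' \in \mathring{\Gamma}_i} \mathbb{E}_\mathcal{N}^i(\gamma \gamma')\, L_{\gamma'}^{*} \;+\; \sum_{\gamma' \in \mathring{\Gamma}_i} \sum_{j=1}^{n_i} L_{e_j^i}\, \mathbb{E}_\mathcal{N}^i(e_j^{i*} \gamma \gamma')\, L_{\gamma'}^{*},
\end{align*}
which is a finite sum of elements of $\mathcal{L}_i^{1,0}$, $\mathcal{L}_i^{0,1}$, and $\mathcal{L}_i^{1,1}$, respectively. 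To check the identity I would split $H_i = L^2(\mathcal{N}) \oplus \bigoplus_{\gamma'' \in \mathring{\Gamma}_i} \gamma'' L^2(\mathcal{N})$ according to the Pimsner-Popa basis and evaluate both sides on each summand. On $L^2(\mathcal{N})$ the operators $L_{\gamma'}^{*}$ vanish and $L_\gamma b = \gamma b$ gives the correct output. On a generator $\gamma'' c \in \gamma'' L^2(\mathcal{N})$ the term $L_\gamma$ vanishes (both creators belong to $\mathring{\Gamma}_i$), while $L_{\gamma'}^{*}(\gamma'' c) = \mathbb{E}_\mathcal{N}^i(\gamma'^{*}\gamma'')\, c = \delta_{\gamma',\gamma''}\, c$ kills all but the $\gamma' = \gamma''$ contributions. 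The surviving $\mathcal{N}$-coefficients then reassemble, via the left Pimsner-Popa expansion of $\gamma\gamma''$ applied to $c$, into $\gamma\gamma''c$, which is $\lambda(\gamma)(\gamma''c)$.

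The only real obstacle is the bookkeeping in this case analysis; the underlying algebra is entirely routine given the defining formulas for $L_\gamma$, $L_\gamma^{*}$, the defining Pimsner-Popa identities, and Lemma \ref{lem_amalgamated_e0_orthogonal} (which is what legitimizes dropping the $j=0$ term so that only $e_j^i \in \mathring{\Gamma}_i$ appear under the inner sum).
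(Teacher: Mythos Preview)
Your argument is correct and is essentially the same as the paper's: both embed $\mathcal{M}_i$ into $\tilde{\mathcal{L}}_i$ via the left action on $H_i$ and use the Pimsner--Popa basis to write left multiplication as a finite sum of terms $L_{\xi}\,b\,L_{\eta}^{*}$. The paper does this in one stroke, setting $\tilde a=\sum_{j,k=0}^{n_i} L_{e_j^i}\,\mathbb{E}^i_\mathcal{N}(e_j^{i*}ae_k^i)\,L_{e_k^i}^{*}$ for arbitrary $a\in\mathcal{M}_i$ and verifying $\tilde a=a$ via the $\mathcal{N}$-valued inner product, whereas you first reduce to generators and then give the analogous formula for $\lambda(\gamma)$; your version has the minor advantage of never invoking the (undefined) symbol $L_{e_0^i}$. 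One small remark: the appeal to Lemma~\ref{lem_amalgamated_e0_orthogonal} is misplaced---that lemma concerns the \emph{right} expansion of $\gamma b$, while what you actually use to isolate the $j=0$ term is simply $e_0^i=1$, so that the $j=0$ contribution $e_0^i\,\mathbb{E}^i_\mathcal{N}(\gamma\gamma')=\mathbb{E}^i_\mathcal{N}(\gamma\gamma')$ becomes your middle sum rather than vanishing.
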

\begin{proof}
We can interpret both $\mathcal{M}_i$ and $\tilde{\mathcal{L}}_i$ as subalgebras of $\mathbb{B}(H_i)$.
Let $a \in \mathcal{M}_i$. Now set
\begin{align}
\tilde{a} = \sum_{j,k=0}^{n_i} L_{e_j^{i}} \mathbb{E}_\mathcal{N}^i(e_j^{i*} a e_k^{i}) L_{e_k^{i}}^* \in \tilde{\mathcal{L}}_i.
\end{align}

As $\mathcal{M}_i$ acts on $H_i = L^2(\mathcal{M}_i,\tau_i)$ we want to check that $a$ and $\tilde{a}$ act as the same operator on $H_i$. As $H_i$ is spanned over $\mathcal{N}$ from the right by $e_0^{i}, \dots, e_{n_i}^{i}$ it is enough to check that
\begin{align}
\langle a e_l^{i}, e_m^{i} \rangle_\mathcal{N} = \langle \tilde{a} e_l^{i}, e_m^{i} \rangle_\mathcal{N}
\end{align}
for all $l,m=0,\dots,n_i$.

But we have
\begin{align}
\langle \tilde{a} e_l^{i}, e_m^{i} \rangle_\mathcal{N} &= \langle \sum_{i,j=2}^n L_{e_j^{i}} \mathbb{E}_\mathcal{N}^i(e_j^{i*} a e_k^{i}) L_{e_k^{i}}^* e_l^{i}, e_m^{i} \rangle_\mathcal{N} \\
&= \sum_{j,k=0}^{n_i} \langle L_{e_j^{i}} \mathbb{E}_\mathcal{N}^i(e_j^{i*} a e_k^{i}) L_{e_k^{i}}^* e_l^{i}, e_m^{i} \rangle_\mathcal{N} \\
&= \sum_{j,k=0}^{n_i} \mathbb{E}_\mathcal{N}^i(e_m^{i*} L_{e_j^{i}} \mathbb{E}_\mathcal{N}^i(e_j^{i*} a e_k^{i}) L_{e_k^{i}}^* e_l^{i}) \\
&= \sum_{j,k=0}^{n_i} \delta_{k,l} \mathbb{E}_\mathcal{N}^i(e_m^{i*} L_{e_j^{i}} \mathbb{E}_\mathcal{N}^i(e_j^{i*} a e_k^{i})) \\
&= \sum_{j,k=0}^{n_i} \delta_{k,l} \mathbb{E}_\mathcal{N}^i(e_m^{i*} L_{e_j^{i}} 1) \mathbb{E}_\mathcal{N}^i(e_j^{i*} a e_k^{i})\\
&= \sum_{j,k=0}^{n_i} \delta_{k,l} \mathbb{E}_\mathcal{N}^i(e_m^{i*} e_j^{i} ) \mathbb{E}_\mathcal{N}^i(e_j^{i*} a e_k^{i})\\
&= \sum_{j,k=0}^{n_i} \delta_{k,l} \delta_{m,j} \mathbb{E}_\mathcal{N}^i(e_j^{i*} a e_k^{i})\\
&= \mathbb{E}_\mathcal{N}^i(e_m^{i*} a e_l^{i})\\
&= \langle a e_l^{i}, e_m^{i} \rangle_\mathcal{N}
\end{align}
as desired.
\end{proof}

\begin{lemma}
\label{lem_amalgamatedRadial_L_amalgamated_free_product_Li}
$\tilde{\mathcal{L}} = \ast_\mathcal{N} \tilde{\mathcal{L}}_i$.
\end{lemma}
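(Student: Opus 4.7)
The plan is to realise $\tilde{\mathcal{L}}$ as the amalgamated free product by showing that the $\tilde{\mathcal{L}}_i$ are free subalgebras generating $\tilde{\mathcal{L}}$ with respect to the canonical vacuum conditional expectation $\mathbb{E}_\mathcal{N} \colon \mathbb{B}_\mathcal{N}(\mathcal{F}_\mathcal{N}) \to \mathcal{N}$, $T \mapsto \langle T \hat 1, \hat 1 \rangle_\mathcal{N}$, where $\hat 1 \in L^2(\mathcal{N}) \subset \mathcal{F}_\mathcal{N}$ denotes the vacuum vector. The key geometric fact is that for each $i \in I$ we have an $\mathcal{N}$-$\mathcal{N}$-bimodule decomposition $\mathcal{F}_\mathcal{N} = H_i \otimes_\mathcal{N} \mathcal{F}_\mathcal{N}^{(i)}$, where $\mathcal{F}_\mathcal{N}^{(i)}$ is the closed span of $L^2(\mathcal{N})$ together with all reduced tensors whose first factor does not lie in $\mathring{H}_i$; this is precisely the decomposition that realises the embedding $\mathbb{B}_\mathcal{N}(H_i) \hookrightarrow \mathbb{B}_\mathcal{N}(\mathcal{F}_\mathcal{N})$ underlying the definition of $\tilde{\mathcal{L}}_i$, as $T \mapsto T \otimes 1$, and it immediately shows that $\mathbb{E}_\mathcal{N}$ restricts on $\tilde{\mathcal{L}}_i$ to the corresponding vacuum expectation on $\mathbb{B}_\mathcal{N}(H_i)$.

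Two of the three required ingredients are easy. The inclusions $\tilde{\mathcal{L}}_i \subset \tilde{\mathcal{L}}$ follow from $\mathcal{L}_i \subset \mathcal{L}$, and to see that $\tilde{\mathcal{L}}$ is generated as a von Neumann algebra by $\bigcup_i \tilde{\mathcal{L}}_i$ one observes that every word in $\mathcal{L}$ factors as a product of short blocks of the form $b L_\xi b'$ or $b L_\eta^* b'$ with $\xi, \eta \in \mathring{\Gamma}$, each sitting in $\tilde{\mathcal{L}}_i$ for the relevant $i$; passing to weak closures yields both inclusions.

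The main step is to verify freeness of $(\tilde{\mathcal{L}}_i)_{i \in I}$ over $\mathcal{N}$ with respect to $\mathbb{E}_\mathcal{N}$. Given $a_j \in \tilde{\mathcal{L}}_{i_j}$ with $\mathbb{E}_\mathcal{N}(a_j) = 0$ and $i_1 \neq i_2 \neq \cdots \neq i_n$, I would prove by induction on $n$ that $a_1 a_2 \cdots a_n \hat 1 \in \mathring{H}_{i_1} \otimes_\mathcal{N} \cdots \otimes_\mathcal{N} \mathring{H}_{i_n}$. For the inductive step, $a_2 \cdots a_n \hat 1$ lies in $\mathring{H}_{i_2} \otimes_\mathcal{N} \cdots \otimes_\mathcal{N} \mathring{H}_{i_n}$ and, since $i_1 \neq i_2$, this subspace is contained in $\mathcal{F}_\mathcal{N}^{(i_1)}$, so under $\mathcal{F}_\mathcal{N} = H_{i_1} \otimes_\mathcal{N} \mathcal{F}_\mathcal{N}^{(i_1)}$ the vector corresponds to $\hat 1 \otimes_\mathcal{N} \chi$; as $a_1$ acts only on the first tensor factor and $\mathbb{E}_\mathcal{N}(a_1) = 0$ forces $a_1 \hat 1 \in \mathring{H}_{i_1}$, we obtain $a_1 a_2 \cdots a_n \hat 1 = a_1 \hat 1 \otimes_\mathcal{N} \chi \in \mathring{H}_{i_1} \otimes_\mathcal{N} \cdots \otimes_\mathcal{N} \mathring{H}_{i_n}$. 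This vector is orthogonal to $\hat 1$, hence $\mathbb{E}_\mathcal{N}(a_1 \cdots a_n) = 0$. Combining the three ingredients with the Fock-space characterisation of the amalgamated free product identifies $\tilde{\mathcal{L}}$ with $\ast_\mathcal{N} \tilde{\mathcal{L}}_i$. The main obstacle is the freeness step, but this reduces to the bimodule bookkeeping just described.
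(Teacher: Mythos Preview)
Your argument is correct and follows the standard Voiculescu--Dykema route: verify that the $\tilde{\mathcal{L}}_i$ are free with amalgamation over $\mathcal{N}$ with respect to the vacuum conditional expectation, that they generate $\tilde{\mathcal{L}}$, and then invoke the Fock-space characterisation of the amalgamated free product. The inductive freeness step is clean and uses precisely the bimodule decomposition $\mathcal{F}_\mathcal{N} \cong H_{i} \otimes_\mathcal{N} \mathcal{F}_\mathcal{N}^{(i)}$ through which the paper embeds $\mathbb{B}_\mathcal{N}(H_i)$ into $\mathbb{B}_\mathcal{N}(\mathcal{F}_\mathcal{N})$.

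The paper, by contrast, argues combinatorially at the level of spanning sets rather than via the abstract freeness condition. It starts from a generic alternating word $b_0 L^{\dagger}_{\xi_1} b_1 \cdots L^{\dagger}_{\xi_n} b_n$ in the algebraic amalgamated free product of the $\mathcal{L}_i$, observes that any subword of the form $L^*_\gamma b L_\delta$ with $\gamma \in \mathring{\Gamma}_i$, $\delta \in \mathring{\Gamma}_j$, $i \neq j$, vanishes, and concludes that only ``Wick-ordered'' words $b_0 L_{\xi_1} \cdots L_{\xi_k} b_k L^*_{\eta_1} \cdots L^*_{\eta_l} \tilde{b}_l$ survive---exactly the elements of $\mathcal{L}$. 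Passing to weak closures on both sides finishes the identification. What your approach buys is conceptual clarity and a proof that does not depend on the particular normal form of the generators; what the paper's approach buys is a concrete spanning set for $\tilde{\mathcal{L}}$ (the set $\mathcal{L}$ itself), which is in any case what the rest of the paper works with. Either argument is adequate here.
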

\begin{proof}
By definition of the algebraic amalgamated free product of $\mathcal{L}_i$ over $\mathcal{N}$, it contains of all words of the form
\begin{align}
b_0 L^{\dagger}_{\xi_1} b_1 L^{\dagger}_{\xi_2} \cdots b_{n-1} L^{\dagger}_{\xi_n} b_n
\end{align}
where $\dagger$ is either $\ast$ or nothing and $\xi_j \in \mathring{\Gamma}_{i_j}$ and $i_1 \neq i_2 \neq \cdots \neq i_n$. But as multiplication with elements from $\mathcal{N}$ does not change which $\mathring{H}_i$ an element belongs to, all terms containing a sub-term of the form $L^*_\gamma b L_\delta$ will vanish, hence it is enough to consider terms of the form
\begin{align}
b_0 L_{\xi_1} b_1 L_{\xi_2} \cdots b_{n-1} L_{\xi_n} b_k L^*_{\eta_1} \tilde{b}_{1} \cdots L^*_{\eta_l} \tilde{b}_l
\end{align}
for neighbouring $\xi_i, \eta_i$ from different $\mathring{\Gamma}_i$ (although $\xi_k$ and $\eta_1$ might be from the same $\mathring{H}_i$) and $b_i \in \mathcal{N}$.  Here we use Observation \ref{obs_amalgamatedRadial_short_L} to be able to restrict us to short terms from each $\mathcal{L}_i$.

And all these terms by definition lie in $\mathcal{L}$.
On the other hand the span over $\mathcal{N}$ of those terms clearly lies in the amalgamated free product and by definition is $w^*$-dense in $\tilde{\mathcal{L}}_i$, hence the two von Neumann algebras are isomorphic.
\end{proof}

The preceeding two lemmas now allow us to prove the main theorem.

\begin{proof}[Proof of Theorem \ref{thm_amalgamatedRadial_main_theorem}]
$(\mathcal{M}_i,\mathbb{E}_\mathcal{N}^i)$ can be realized as subalgebras of $\tilde{\mathcal{L}_i}$ by Lemma \ref{lem_amalgamatedRadial_M_in_L} and hence $(\mathcal{M},\mathbb{E}_\mathcal{N}^i)$ by Lemma \ref{lem_amalgamatedRadial_L_amalgamated_free_product_Li} can be realized as a subalgebra of $\tilde{\mathcal{L}}$. Then as condition (b) and hence (a) holds for all of $\tilde{\mathcal{L}}$ by Lemma \ref{lem_amalgamatedRadial_equiv_T_mult_T_cases} it also holds for elements in the subalgebra $\mathcal{M}$ as the length of an operator in the amalgamated free product is preserved when restricting to a subalgebra. Hence $T$ restricted to the subalgebra has the desired behaviour and $M_\phi =T|_\mathcal{M}$ can be obtained by restricting $T$ to $\mathcal{M}$ and we then have $\|M_\phi\|_{cb} = \| T|_\mathcal{M} \|_{cb} \le \| T \|_{cb} \le \|\phi\|_\mathcal{C}$.
\end{proof}

\bibliography{thesis}

\end{document}